\newcommand{\axiomsystem}[1]{\mathrm{#1}}
\newcommand{\C}[0]{\axiomsystem{C}}
\newcommand{\D}[0]{\axiomsystem{D}}
\newcommand{\logic}[1]{\mathsf{#1}}
\newcommand{\dtwo}[0]{\logic{D2}}
\newcommand{\Sfive}[0]{\logic{S5}}
\newcommand{\ddk}[1]{\mathrm{D}_{#1}}
\newcommand{\ciuciura}[1]{\mathrm{C}_{#1}}
\newcommand{\Cunprovable}[1]{\not\vdash_{\mathrm{C}}{#1}}
\newcommand{\Dunprovable}[1]{\not\vdash_{\mathrm{D}}{#1}}
\newcommand{\negationsymbol}[0]{\mathop\sim}
\newcommand{\implicationsymbol}[0]{\rightarrow}
\newcommand{\conjunctionsymbol}[0]{\wedge}
\newcommand{\discussiveimplicationsymbol}[0]{\implicationsymbol_{\mathrm{d}}}
\newcommand{\discussiveconjunctionsymbol}[0]{\conjunctionsymbol_{\mathrm{d}}}
\newcommand{\disjunctionsymbol}[0]{\vee}
\newcommand{\di}[2]{{#1} \discussiveimplicationsymbol {#2}}
\newcommand{\dc}[2]{{#1} \discussiveconjunctionsymbol {#2}}
\newcommand{\disjunction}[2]{{#1} \disjunctionsymbol {#2}}
\newcommand{\conjunction}[2]{{#1} \conjunctionsymbol {#2}}
\newcommand{\negation}[1]{\negationsymbol {#1}}
\newcommand{\implication}[2]{{#1} \implicationsymbol {#2}}
\newcommand{\equivalencesymbol}[0]{\leftrightarrow}
\newcommand{\equivalence}[2]{{#1} \equivalencesymbol {#2}}
\newcommand{\possibly}[1]{\lozenge{#1}}
\newcommand{\falsifies}[2]{{#1} \nvDash {#2}}
\newcommand{\truthifies}[2]{{#1} \vDash {#2}}
\def\kn{\kern.1em}
\newtheorem{theo}{Theorem}[section]
\newtheorem{coro}[theo]{Corollary}
\newtheorem{prop}[theo]{Proposition}
\theoremstyle{definition}
\begin{document}

\setcounter{page}{1}     



\AuthorTitle{Jesse Alama}{Some problems with two axiomatizations of discussive logic}






\PresentedReceived{Name of Editor}{December 1, 2005}


\begin{abstract}Problems in two axiomatizations of Jaśkowski's discussive (or discursive) logic $\dtwo$ are considered.  A recent axiomatization of $\dtwo$ and completeness proof relative to $\dtwo$'s intended semantics seems to be mistaken because some formulas valid according to the intended semantics turn out to be unprovable.  Although no new axiomatization is offered, nor a repaired completeness proof given, the shortcomings identified here may be a step toward an improved axiomatization.\end{abstract}

\Keywords{discussive logic, discursive logic, modal logic, automated theorem proving, countermodel, matrices
}


\section*{Introduction}

Jaśkowski's discussive (or discursive) logic is a paraconsistent logic of opinions.  The idea is that it is one thing is a reasoner contradicts himself; it is quite another for two reasoners to contradict one another.  The latter is an everyday phenomenon that is all too familiar, but difficult to justify from the standard logical standpoint, which accepts ``explosion principles'' such as ex contradictione quodlibet ($\implication{(\conjunction{p}{\negation{p}})}{q}$.  The idea is that an opinion, on its own, is to be understood as consistent (in the logical sense of the term), but might conflict with other opinions.  It is one thing if a speaker contradicts himself; it is another if two speakers hold conflicting opinions.  We recognize the former as leading to a kind of incoherence; the latter is not necessarily incoherent, but perhaps calls for resolution of conflict.

Jaśkowski did not offer an axiomatization of his new logic.  His contribution was to formulate the problem of giving a rigorous account of a certain paraconsistent discursive phenomenon.  He spent much of his attention on a discussion of the notion of paraconsistency and its justification from the standard non-paraconsistent point of view.  Axiomatizations came later.

In a recent paper~\cite{ciuciura2008frontiers}, Ciuciura takes issue with an axiomatization, $\D$ (due to da~Costa, Dubikajtis, and Kotas~\cite{achtelik1981independence,kotas1977modal,dacosta1977new}) of $\dtwo$.  He points out two axioms of $\D$ that are not valid in the intended semantics of $\dtwo$.  To repair the difficulties, Ciuciura gave a new axiomatization, $\C$, of $\dtwo$ and proved soundness and completeness of the new axiomatization relative to the standard semantics of $\dtwo$, which is based on the modal logic $\Sfive$.

Although $\D$ apparently does go beyond the intention of $\dtwo$ (that is, there are theorems of $\D$ that are not valid in the intended semantics of $\dtwo$), Ciuciura's proposed fix for the problem (that is, the replacement of $\D$ by the new axiom system $\C$) is itself problematic.  Namely, there are theses in the language of $\dtwo$ that are valid according to the intended semantics but are not theorems of $\C$.  A consequence of this is that Ciuciura's completeness proof is somehow flawed.

In Section~\ref{sec:dueling} the two axiomatizations $\C$ of Ciuciura and $\D$ of da~Costa, Dubikajtis, and Kotas are presented.  Section~\ref{sec:non-equivalence} treats the non-equivalence of $\C$ and $\D$.  Ciuciura pointed out two axioms of $\D$ that fail in the intended semantics of $\dtwo$.  Although even one such example suffices to make his point, we complete the discussion by pointing out even more axioms of $\D$, not mentioned by Ciuciura, that are likewise invalid.  One might get the impression from Ciuciura's criticism that $\D$ merely `overshoots' the intended semantics and can be repaired by simply dropping some axioms.  We find, though, that $\C$ is not simply a restriction of $\D$: there are axioms of $\C$ that are not theorems of $\D$.  We are left in a rather puzzling situation: $\C$ and $\D$ are ``orthogonal'' in the sense that they overlap one another, but each has theorems that are not found in the other.  Moreover, we find that there are some $\dtwo$-valid formulas unprovable in $\D$.  It would appear that the problem of axiomatizing $\dtwo$ remains unsolved.

\section{Dueling axiomatizations}\label{sec:dueling}%

For the sake of completeness, below are two axiomatizations in the language $\negationsymbol$ (negation), $\disjunctionsymbol$ (disjunction), $\discussiveimplicationsymbol$ (discussive implication), and $\discussiveconjunctionsymbol$ (discussive conjunction).  The first batch is due to da~Costa, Dubikajtis, and Kotas and is, for short, called simply ``$\D$''; the second batch is due to Ciuciura and is called ``$\C$''.  Both $\C$ and $\D$ use modus ponens (for discussive implication) as their sole rule of inference.  The axiom lists are simply repeated from~\cite{ciuciura2008frontiers} (to more clearly distinguish the two axiom systems, we use the ``$\C$'' and ``$\D$'' prefixes, whereas Ciuciura used the prefix ``$\mathrm{A}$'' for both sets of axioms).  The axiom sets are given schematically; Greek letters are variables ranging over formulas.

\begin{description}
\item[$\ddk{1}$] $\di{\alpha}{(\di{\beta}{\alpha})}$
\item[$\ddk{2}$] $\di{(\di{\alpha}{(\di{\beta}{\gamma})})}{(\di{(\di{\alpha}{\beta})}{(\di{\alpha}{\gamma})})}$
\item[$\ddk{3}$] $\di{(\di{(\di{\alpha}{\beta})}{\alpha})}{\alpha}$
\item[$\ddk{4}$] $\di{(\dc{\alpha}{\beta})}{\alpha}$
\item[$\ddk{5}$] $\di{(\dc{\alpha}{\beta})}{\beta}$
\item[$\ddk{6}$] $\di{\alpha}{(\di{\beta}{(\dc{\alpha}{\beta})})}$
\item[$\ddk{7}$] $\di{\alpha}{(\disjunction{\alpha}{\beta})}$
\item[$\ddk{8}$] $\di{\beta}{(\disjunction{\alpha}{\beta})}$
\item[$\ddk{9}$] $\di{(\di{\alpha}{\gamma})}{(\di{(\di{\beta}{\gamma})}{\di{(\disjunction{\alpha}{\beta})}{\gamma}})}$
\item[$\ddk{10}$] $\di{\alpha}{\negation{\negation{\alpha}}}$
\item[$\ddk{11}$] $\di{\negation{\negation{\alpha}}}{\alpha}$
\item[$\ddk{12}$] $\di{\negation{(\disjunction{\alpha}{\negation{\alpha}})}}{\beta}$
\item[$\ddk{13}$] $\di{\negation{(\disjunction{\alpha}{\beta})}}{\negation{(\disjunction{\beta}{\alpha})}}$
\item[$\ddk{14}$] $\di{\negation{(\disjunction{\alpha}{\beta})}}{(\dc{\negation{\beta}}{\negation{\alpha}})}$
\item[$\ddk{15}$] $\di{\negation{(\disjunction{\negation{\negation{\alpha}}}{\beta})}}{\negation{(\disjunction{\alpha}{\beta})}}$
\item[$\ddk{16}$] $\di{(\negation{\di{(\disjunction{\alpha}{\beta})}{\gamma}})}{(\disjunction{(\di{\negation{\alpha}}{\beta})}{\gamma})}$
\item[$\ddk{17}$] $\di{\negation{(\disjunction{(\disjunction{\alpha}{\beta})}{\gamma})}}{\negation{(\disjunction{\alpha}{(\disjunction{\beta}{\gamma})})}}$
\item[$\ddk{18}$] $\di{\negation{(\disjunction{(\di{\alpha}{\beta})}{\gamma})}}{(\dc{\alpha}{\negation{(\disjunction{\beta}{\gamma})}})}$
\item[$\ddk{19}$] $\di{\negation{(\disjunction{(\dc{\alpha}{\beta})}{\gamma})}}{(\di{\alpha}{\negation{(\disjunction{\beta}{\gamma})}})}$
\item[$\ddk{20}$] $\di{\negation{(\disjunction{\negation{(\disjunction{\alpha}{\beta})}}{\gamma})}}{(\disjunction{\negation{(\disjunction{\negation{\alpha}}{\gamma})}}{\negation{(\disjunction{\negation{\beta}}{\gamma})}})}$
\item[$\ddk{21}$] $\di{\negation{(\disjunction{\negation{(\di{\alpha}{\beta})}}{\gamma})}}{(\di{\alpha}{\negation{(\disjunction{\negation{\beta}}{\gamma})}})}$
\item[$\ddk{22}$] $\di{\negation{(\disjunction{\negation{(\dc{\alpha}{\beta})}}{\gamma})}}{(\dc{\alpha}{\negation{(\disjunction{\negation{\beta}}{\gamma})}})}$
\end{description}

We now turn to Ciuciura's axiomatization.

\begin{description}
\item[$\ciuciura{1}$] $\di{\alpha}{(\di{\beta}{\alpha})}$
\item[$\ciuciura{2}$] $\di{(\di{\alpha}{(\di{\beta}{\gamma})})}{(\di{(\di{\alpha}{\beta})}{(\di{\alpha}{\gamma})})}$
\item[$\ciuciura{3}$] $\di{(\dc{\alpha}{\beta})}{\alpha}$
\item[$\ciuciura{4}$] $\di{(\dc{\alpha}{\beta})}{\beta}$
\item[$\ciuciura{5}$] $\di{\alpha}{(\di{\beta}{(\dc{\alpha}{\beta})})}$
\item[$\ciuciura{6}$] $\di{\alpha}{(\disjunction{\alpha}{\beta})}$
\item[$\ciuciura{7}$] $\di{\beta}{(\disjunction{\alpha}{\beta})}$
\item[$\ciuciura{8}$] $\di{(\di{\alpha}{\gamma})}{(\di{(\di{\beta}{\gamma})}{\di{(\disjunction{\alpha}{\beta})}{\gamma}})}$
\item[$\ciuciura{9}$] $\disjunction{\alpha}{(\di{\alpha}{\beta})}$
\item[$\ciuciura{10}$] $\negation{(\dc{\negation{\alpha}}{\dc{\negation{\negation{\alpha}}}{\negation{(\disjunction{\alpha}{\negation{\alpha}})}}})}$
\item[$\ciuciura{11}$] $\di{\negation{(\dc{\negation{\alpha}}{\dc{\negation{\beta}}{\negation{(\disjunction{\alpha}{\beta})}}})}}{\negation{(\dc{\negation{\alpha}}{\dc{\negation{\beta}}{\dc{\negation{\gamma}}{\negation{(\disjunction{\alpha}{\disjunction{\beta}{\gamma}})}}}})}}$
\item[$\ciuciura{12}$] $\di{\negation{(\dc{\negation{\alpha}}{\dc{\negation{\beta}}{\dc{\negation{\gamma}}{\negation{(\disjunction{\alpha}{\disjunction{\beta}{\gamma}})}}}})}}{\negation{(\dc{\negation{\alpha}}{\dc{\negation{\gamma}}{\dc{\negation{\beta}}{\negation{(\disjunction{\alpha}{\disjunction{\gamma}{\beta}})}}}})}}$
\item[$\ciuciura{13}$] $\di{\negation{(\dc{\negation{\alpha}}{\dc{\negation{\beta}}{\dc{\negation{\gamma}}{\negation{(\disjunction{\alpha}{\disjunction{\beta}{\gamma}})}}}})}}{(\di{(\disjunction{\alpha}{\disjunction{\beta}{\negation{\gamma}}})}{(\disjunction{\alpha}{\beta})})}$
\item[$\ciuciura{14}$] $\di{\negation{(\dc{\negation{\alpha}}{\negation{\beta}})}}{(\disjunction{\alpha}{\beta})}$
\item[$\ciuciura{15}$] $\di{(\disjunction{\alpha}{(\disjunction{\beta}{\negation{\beta}})})}{\negation{(\dc{\negation{\alpha}}{\negation{(\disjunction{\beta}{\negation{\beta}})}})}}$
\end{description}

\section{Non-equivalence of the axiomatizations}\label{sec:non-equivalence}%

Section~\ref{sec:unprovable-in-c} lists the axioms of $\D$ that are unprovable in $\C$ and Section~\ref{sec:unprovable-in-d} lists the axioms of $\C$ that are unprovable in $\D$.  It is clear that $\C$ and $\D$ share theorems, since some of the axioms of the two (e.g., $\ciuciura{1}$ and $\ddk{1}$) are identical.  Thus, we are in possession of two ``orthogonal'' logics: they overlap but neither theory includes the other.

\subsection{Theorems of $\D$ unprovable in $\C$}\label{sec:unprovable-in-c}

Ciuciura points out that $\ddk{19}$ and $\ddk{22}$ are invalid in the intended semantics of $\dtwo$.  Since Ciuciura's $\C$ is intended to be sound and complete for the intended semantics, it is clear that neither $\ddk{19}$ nor $\ddk{22}$ ought to be theorems of $\C$.  It is worth, however, investigating this from an axiomatic point of view, that is, without referring to the intended semantics.  Moreover, it is worth asking whether there might be other theorems of $\D$ that are unprovable in $\C$.  Are $\ddk{19}$ and $\ddk{22}$ the only ones?  The answer, made clear in the following series of Propositions, is a resounding ``no''.  Briefly, all of $\ddk{10}$ to $\ddk{22}$, save for $\ddk{11}$, are unprovable in $\C$.

\begin{prop}\label{prop:ddk-10-unprovable}$\Cunprovable{\ddk{10}}$\end{prop}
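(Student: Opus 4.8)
The plan is to prove the non-derivability by the standard matrix (countermodel) method. I will exhibit a finite logical matrix $\mathfrak{M} = \langle V, D\rangle$, where $V$ is a finite set of truth values, $D \subseteq V$ is the set of designated values, and each of the four connectives $\negationsymbol$, $\disjunctionsymbol$, $\discussiveimplicationsymbol$, $\discussiveconjunctionsymbol$ is interpreted by an operation on $V$, such that: (i) every instance of each Ciuciura axiom $\ciuciura{1}$ through $\ciuciura{15}$ receives a designated value under every assignment of values to the propositional variables; (ii) the matrix is closed under modus ponens for $\discussiveimplicationsymbol$, i.e.\ whenever a valuation $v$ gives $v(\alpha) \in D$ and $v(\di{\alpha}{\beta}) \in D$, it also gives $v(\beta) \in D$; and yet (iii) there is a valuation $v$ with $v(\di{\alpha}{\negation{\negation{\alpha}}}) \notin D$. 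Since modus ponens is the sole rule of inference of $\C$, conditions (i) and (ii) together guarantee that every $\C$-theorem is designated under every valuation, so the single falsifying valuation of (iii) witnesses $\Cunprovable{\ddk{10}}$.

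To find such a matrix I would search among small matrices (two, three, or four values), which already supply a large space of candidate interpretations while remaining feasible to check exhaustively; in the spirit of the paper's automated-reasoning approach, a finite model finder can be used to produce a candidate $\mathfrak{M}$ and to verify the obligations mechanically. The search can be guided by the observation that $\ddk{10}$ is double-negation introduction, so the separating matrix must interpret $\negationsymbol$ in such a way that the value of $\negation{\negation{\alpha}}$ is \emph{not} in general forced to lie ``above'' the value of $\alpha$ with respect to the preorder that $\discussiveimplicationsymbol$ and $D$ induce, while still validating the implication-, disjunction-, and conjunction-fragment axioms $\ciuciura{1}$ through $\ciuciura{9}$ and the negation axioms $\ciuciura{10}$ through $\ciuciura{15}$.

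The main obstacle will be verification obligation (i): there are fifteen axiom schemas, several of them — notably $\ciuciura{11}$, $\ciuciura{12}$, and $\ciuciura{13}$ — deeply nested, so confirming that every assignment yields a designated value is voluminous, though entirely routine, and is best delegated to a computation that tabulates each schema over all assignments of the finitely many values to its variables. By contrast, obligation (ii) reduces to a single finite inspection of the $\discussiveimplicationsymbol$-table against $D$, and obligation (iii) is immediate once the separating value for the lone variable $\alpha$ is displayed. The genuinely creative step is finding a matrix that is simultaneously small enough to verify, sound for all of $\C$, and yet refutes $\ddk{10}$; once such a matrix is in hand, the remainder is bookkeeping, and the same matrix may well serve to refute several of the subsequent axioms treated in the propositions that follow.
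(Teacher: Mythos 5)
Your overall strategy is exactly the one the paper uses: exhibit a finite logical matrix with a set of designated values under which every instance of every axiom $\ciuciura{1}$--$\ciuciura{15}$ is designated, designation is preserved by modus ponens for $\discussiveimplicationsymbol$, and some instance of $\ddk{10}$ is undesignated. Your three verification obligations (i)--(iii) are also stated correctly, and your remark that the same or similar matrices can serve for the later propositions matches what the paper actually does (several of its tables are reused across propositions). However, there is a genuine gap: you never produce the matrix. You explicitly defer ``the genuinely creative step'' of finding it to an unspecified computer search, but that step is the entire mathematical content of the proposition; a proof that says ``a suitable countermodel exists and can be found by a model finder'' establishes nothing until the witness is displayed, since it is not obvious a priori that \emph{any} finite matrix separates $\C$ from $\ddk{10}$ (for all one knows in advance, $\ddk{10}$ might be derivable in $\C$, in which case the search would simply never terminate with success).

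For comparison, the paper's proof consists precisely of such a witness: a three-valued matrix with negation given by $1 \mapsto 3$, $2 \mapsto 3$, $3 \mapsto 2$, tables for $\disjunctionsymbol$, $\discussiveconjunctionsymbol$, $\discussiveimplicationsymbol$, and designated values $\{1,3\}$. One can check on that table that designation is closed under modus ponens, and that the valuation $v(\alpha) = 1$ gives $v(\negation{\negation{\alpha}}) = 2$ and $v(\di{\alpha}{\negation{\negation{\alpha}}}) = 2$, which is undesignated, so $\Cunprovable{\ddk{10}}$. (The bulk check that all fifteen $\C$-schemas are validated is, as you say, routine but voluminous, and the paper leaves it implicit.) To complete your proposal you would need to carry out the search you describe and record its output; without the concrete tables, the argument is a proof plan rather than a proof.
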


\begin{proof}
Consider the following matrices (Table~\ref{tab:ddk-10-unprovable}) over the three truth values $\{ 1, 2, 3 \}$ for $\discussiveimplicationsymbol$, $\discussiveconjunctionsymbol$, $\disjunctionsymbol$, and $\negationsymbol$:

\begin{table}[h]
\centering\footnotesize
\begin{tabular}{c||c}
Value & Value of $\negationsymbol$\\
\hline
1 & 3\\
2 & 3\\
3 & 2\\
\end{tabular}
\begin{tabular}{c|c||c|c|c|c|}
Value & Value & $\disjunctionsymbol$ & $\discussiveconjunctionsymbol$ & $\discussiveimplicationsymbol$\\
\hline
1 & 1 & 3 & 3 & 3\\
1 & 2 & 1 & 2 & 2\\
1 & 3 & 3 & 1 & 3\\
2 & 1 & 1 & 2 & 3\\
2 & 2 & 2 & 2 & 1\\
2 & 3 & 3 & 2 & 2\\
3 & 1 & 3 & 1 & 1\\
3 & 2 & 3 & 2 & 2\\
3 & 3 & 3 & 3 & 3\\
\end{tabular}
\caption{\label{tab:ddk-10-unprovable}Matrices validating $\C$ in which $\ddk{10}$ is invalid}
\end{table}

We then declare that any formula having the values $1$ or $3$ according to these tables is provable; formulas having value $2$ are unprovable.
\end{proof}

\begin{prop}\label{prop:ddk-12-unprovable}$\Cunprovable{\ddk{12}}$\end{prop}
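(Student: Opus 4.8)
The plan is to follow exactly the matrix (many-valued) strategy used in the proof of Proposition~\ref{prop:ddk-10-unprovable}: exhibit a finite logical matrix---a set of truth values together with a designated subset and operation tables for $\negationsymbol$, $\disjunctionsymbol$, $\discussiveconjunctionsymbol$, and $\discussiveimplicationsymbol$---that validates every axiom of $\C$ while driving some instance of $\ddk{12}$ to an undesignated value. Since soundness with respect to such a matrix guarantees that every $\C$-theorem takes a designated value under every assignment, a single undesignated instance of $\ddk{12}$ witnesses $\Cunprovable{\ddk{12}}$. The first thing I would try is the very matrix already displayed in Table~\ref{tab:ddk-10-unprovable}, which is known to validate $\C$ and to respect detachment; reusing it spares re-checking the fifteen axiom schemata. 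Concretely, I would compute the antecedent $\negation{(\disjunction{\alpha}{\negation{\alpha}})}$ in that matrix and then scan the $\discussiveimplicationsymbol$ column for an undesignated output, which would immediately settle whether that matrix already suffices for $\ddk{12}$ as well.

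If a fresh matrix turns out to be wanted, the verification decomposes into three bounded, mechanical checks. First, each schema $\ciuciura{1}$ through $\ciuciura{15}$ must evaluate to a designated value for every assignment of truth values to its variables. Second, detachment for $\discussiveimplicationsymbol$ must preserve designation, that is, there must be no pair of values $(A,B)$ with $A$ and $\di{A}{B}$ both designated but $B$ undesignated; since modus ponens is the sole rule of inference in $\C$, this single condition is exactly what is needed. Third, the exhibited assignment must make $\ddk{12}$ undesignated. Each of the first two checks is a finite exhaustive computation (at most $3^{3}=27$ rows per schema over three values), and the third is a single evaluation.

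The genuine difficulty lies not in the verification but in the construction: finding operation tables that simultaneously keep all of $\C$ valid yet break $\ddk{12}$. This is a constraint-satisfaction problem over the finitely many candidate tables, and---consistent with the automated-countermodel theme of the paper---I expect to delegate it to a finite-model finder rather than to solve it by hand. Thus the main obstacle is the first step together with the search behind it; once a suitable matrix is produced, confirming that it refutes $\ddk{12}$ while validating $\C$ is entirely routine.
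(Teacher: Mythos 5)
Your overall strategy coincides with the paper's: exhibit finite matrices with a designated subset of values under which every axiom of $\C$ is always designated, detachment for $\discussiveimplicationsymbol$ preserves designation, and some instance of $\ddk{12}$ comes out undesignated. The paper does exactly this, but with a \emph{fresh} four-valued matrix (Table~\ref{tab:ddk-12-unprovable}, designated values $2,3,4$): there, when $\alpha$ has value $2$ one computes $\negation{\alpha}=3$, $\disjunction{\alpha}{\negation{\alpha}}=2$, hence the antecedent $\negation{(\disjunction{\alpha}{\negation{\alpha}})}=3$, and $\di{3}{1}=1$ is undesignated, so $\ddk{12}$ is refuted at $\beta$ of value $1$. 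Your checklist of verification obligations (all fifteen schemata, preservation of designation under modus ponens, one undesignated instance) is exactly the right one.

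The genuine gap is in your preferred shortcut, reusing Table~\ref{tab:ddk-10-unprovable} precisely so as to ``spare re-checking the fifteen axiom schemata.'' The arithmetic you propose does come out right: in that matrix $\negation{(\disjunction{\alpha}{\negation{\alpha}})}$ evaluates to $2$ for \emph{every} value of $\alpha$, and $\di{2}{3}=2$ is undesignated, so that matrix would refute $\ddk{12}$ \emph{if} it validated $\C$. But as printed it does not: $\ciuciura{1}$, i.e.\ $\di{\alpha}{(\di{\beta}{\alpha})}$, takes the undesignated value $2$ when $\alpha$ has value $3$ and $\beta$ has value $2$ (since $\di{2}{3}=2$ and $\di{3}{2}=2$); similarly $\ciuciura{6}$ and $\ciuciura{9}$ fail at $(\alpha,\beta)=(2,3)$, where $\di{2}{3}=2$, $\disjunction{2}{3}=3$, $\di{2}{3}=2$ again, and $\disjunction{2}{2}=2$. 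So a proof that leans on that table's alleged soundness for $\C$ rests on a false premise, and the re-checking you hoped to skip is indispensable; carried out, it forces you into your fallback branch---the search for a fresh matrix---which is precisely what the paper's proof does. Your fallback states the correct conditions, but until the search actually produces a matrix (the paper's Table~\ref{tab:ddk-12-unprovable} is such a candidate, and it does pass spot-checks of $\ciuciura{1}$, $\ciuciura{6}$, $\ciuciura{9}$, and modus ponens), the proposal has no countermodel in hand and hence no proof of $\Cunprovable{\ddk{12}}$.
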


\begin{proof}
Consider the following matrices (Table~\ref{tab:ddk-12-unprovable}):
\begin{table}[h]
\centering\footnotesize
\begin{tabular}{c||c}
Value & Value of $\negationsymbol$\\
\hline
1 & 4\\
2 & 3\\
3 & 4\\
4 & 1\\
\end{tabular}
\begin{tabular}{c|c||c|c|c|c|}
Value & Value & $\disjunctionsymbol$ & $\discussiveconjunctionsymbol$ & $\discussiveimplicationsymbol$\\
\hline
1 & 1 & 1 & 1 & 4\\
1 & 2 & 2 & 1 & 2\\
1 & 3 & 3 & 1 & 2\\
1 & 4 & 4 & 1 & 4\\
2 & 1 & 2 & 1 & 1\\
2 & 2 & 2 & 3 & 3\\
2 & 3 & 2 & 4 & 3\\
2 & 4 & 2 & 4 & 3\\
3 & 1 & 3 & 1 & 1\\
3 & 2 & 3 & 3 & 4\\
3 & 3 & 3 & 2 & 4\\
3 & 4 & 4 & 2 & 3\\
4 & 1 & 4 & 1 & 1\\
4 & 2 & 4 & 4 & 2\\
4 & 3 & 4 & 3 & 3\\
4 & 4 & 4 & 4 & 3\\
\end{tabular}
\caption{\label{tab:ddk-12-unprovable}Matrices validating $\C$ in which $\ddk{12}$ is invalid}
\end{table}
We then declare that any formula having the values $2$, $3$, or $4$ according to these tables is provable; formulas having value $1$ declared to be unprovable.
\end{proof}

\begin{prop}\label{prop:ddk-13-unprovable}$\Cunprovable{\ddk{13}}$\end{prop}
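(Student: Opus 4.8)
The plan is to follow the same matrix (many-valued) method used in Propositions~\ref{prop:ddk-10-unprovable} and~\ref{prop:ddk-12-unprovable}. Concretely, I would search for a finite matrix $\langle V, D, \negationsymbol, \disjunctionsymbol, \discussiveconjunctionsymbol, \discussiveimplicationsymbol\rangle$ --- a set $V$ of truth values, a set $D \subseteq V$ of designated values, and interpretations of the four connectives --- with two properties: (i) every axiom $\ciuciura{1},\dots,\ciuciura{15}$ takes a designated value under every assignment of values to its variables, and (ii) the matrix is closed under modus ponens for $\discussiveimplicationsymbol$, i.e.\ whenever $\alpha$ and $\di{\alpha}{\beta}$ are designated, so is $\beta$. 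By the standard soundness argument for matrix semantics, (i) and (ii) together force every theorem of $\C$ to be designated under every assignment. Hence, to establish $\Cunprovable{\ddk{13}}$ it suffices to exhibit a single assignment under which $\ddk{13}$, namely $\di{\negation{(\disjunction{\alpha}{\beta})}}{\negation{(\disjunction{\beta}{\alpha})}}$, receives an undesignated value.

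The guiding idea for the falsification is that $\ddk{13}$ asserts a form of commutativity of disjunction under negation. To break it, the matrix interpretation of $\disjunctionsymbol$ (possibly in concert with $\negationsymbol$) must be asymmetric, so that there exist values for which $\negation{(\disjunction{\alpha}{\beta})}$ is designated while $\negation{(\disjunction{\beta}{\alpha})}$ is not, and with $\discussiveimplicationsymbol$ sending that pair to an undesignated value. Crucially, none of the $\C$-axioms forces $\disjunctionsymbol$ to be commutative on the nose: $\ciuciura{6}$ and $\ciuciura{7}$ require only $\di{\alpha}{(\disjunction{\alpha}{\beta})}$ and $\di{\beta}{(\disjunction{\alpha}{\beta})}$, and the proof-by-cases axiom $\ciuciura{8}$ constrains disjunction only through implications into a common consequent. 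Even $\ciuciura{12}$, which does express a commutativity, does so only for the heavily negated and conjoined context $\negation{(\dc{\negation{\alpha}}{\dc{\negation{\beta}}{\dc{\negation{\gamma}}{\negation{(\disjunction{\alpha}{\disjunction{\beta}{\gamma}})}}}})}$ rather than for bare $\negation{(\disjunction{\alpha}{\beta})}$. This leaves enough slack to choose an order-sensitive $\disjunctionsymbol$ while still satisfying both the positive fragment ($\ciuciura{1}$--$\ciuciura{9}$) and the negation axioms ($\ciuciura{10}$--$\ciuciura{15}$).

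The main obstacle is verification rather than ideation: the negation-laden axioms $\ciuciura{10}$ through $\ciuciura{15}$ are structurally intricate and involve up to three variables, so confirming that each stays designated across all $|V|^{3}$ assignments --- while the asymmetric $\disjunctionsymbol$ simultaneously falsifies $\ddk{13}$ --- is a delicate combinatorial balancing act. In practice I would discharge this by an exhaustive machine check (as the paper's reliance on automated matrix search suggests): fix a small $|V|$ (trying $|V| = 3$ first, then enlarging if no witness is found), enumerate candidate tables subject to the asymmetry requirement on $\disjunctionsymbol$, and for each candidate mechanically test closure under modus ponens and the designation of all fifteen $\C$-axioms, retaining only matrices that additionally leave $\ddk{13}$ undesignated on some assignment. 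Presenting the resulting table together with the stipulated set of designated values then constitutes the complete proof, exactly as in the two preceding propositions.
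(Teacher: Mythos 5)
Your strategy is exactly the one the paper uses: exhibit a finite matrix in which every axiom of $\C$ takes a designated value under every assignment, the designated values are closed under modus ponens for $\discussiveimplicationsymbol$, and some instance of $\ddk{13}$ is undesignated. The soundness argument you give for why this suffices is correct, and your diagnosis that the axioms of $\C$ leave enough ``slack'' for a non-commutative $\disjunctionsymbol$ is a sensible guide for the search.

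However, what you have written is a plan for producing a proof, not a proof. The entire mathematical content of this proposition is the witness itself: a concrete table of values for $\negationsymbol$, $\disjunctionsymbol$, $\discussiveconjunctionsymbol$, and $\discussiveimplicationsymbol$, together with a choice of designated values, satisfying your conditions (i) and (ii) while giving some instance of $\di{\negation{(\disjunction{\alpha}{\beta})}}{\negation{(\disjunction{\beta}{\alpha})}}$ an undesignated value. You defer precisely this step to a machine search that you have not carried out, and your observation that no axiom of $\C$ forces commutativity of $\disjunctionsymbol$ ``on the nose'' is only a plausibility argument, not a demonstration that such a matrix exists --- that existence claim is exactly what the proof must establish, since it is conceivable a priori that the axioms jointly entail the relevant commutativity even though no single axiom states it. For comparison, the paper's proof exhibits a specific three-valued matrix (negation sending $1 \mapsto 3$, $2 \mapsto 2$, $3 \mapsto 1$, with an asymmetric disjunction table), declares values $1$ and $2$ designated and $3$ undesignated, and the finite verification then settles the claim; for instance, taking $\alpha$ with value $2$ and $\beta$ with value $1$ there makes the antecedent designated and the whole implication undesignated. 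Until you produce and check such a table, the proposition remains unproved.
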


\begin{proof}
Consider the following matrices (Table~\ref{tab:ddk-13-unprovable}):
\begin{table}[h]
\centering\footnotesize
\begin{tabular}{c||c}
Value & Value of $\negationsymbol$\\
\hline
1 & 3\\
2 & 2\\
3 & 1\\
\end{tabular}
\begin{tabular}{c|c||c|c|c|c|}
Value & Value & $\disjunctionsymbol$ & $\discussiveconjunctionsymbol$ & $\discussiveimplicationsymbol$\\
\hline
1 & 1 & 2 & 1 & 2\\
1 & 2 & 1 & 1 & 1\\
1 & 3 & 1 & 3 & 3\\
2 & 1 & 2 & 2 & 1\\
2 & 2 & 1 & 2 & 1\\
2 & 3 & 2 & 3 & 3\\
3 & 1 & 1 & 3 & 1\\
3 & 2 & 2 & 3 & 1\\
3 & 3 & 3 & 3 & 1\\
\end{tabular}
\caption{\label{tab:ddk-13-unprovable}Matrices validating $\C$ in which $\ddk{13}$ is invalid}
\end{table}
We then declare that any formula having the values $1$ or $2$ according to these tables is provable; formulas having value $3$ are declared to be unprovable.
\end{proof}

\begin{prop}\label{prop:ddk-14-unprovable}$\Cunprovable{\ddk{14}}$\end{prop}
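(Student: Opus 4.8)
The plan is to reuse the matrix (or countermodel) method already employed in Propositions~\ref{prop:ddk-10-unprovable}--\ref{prop:ddk-13-unprovable}: I would exhibit a finite logical matrix---an algebra of truth values equipped with operations interpreting $\negationsymbol$, $\disjunctionsymbol$, $\discussiveconjunctionsymbol$, and $\discussiveimplicationsymbol$, together with a designated subset of values---that is a model of $\C$ but not of $\ddk{14}$. The matrix must meet two soundness conditions: (i) every instance of each axiom $\ciuciura{1}$--$\ciuciura{15}$ evaluates to a designated value under every assignment of truth values to its schematic letters, and (ii) modus ponens for $\discussiveimplicationsymbol$ preserves designation, i.e., whenever $\alpha$ and $\di{\alpha}{\beta}$ are both designated then so is $\beta$. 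Given (i) and (ii), a routine induction on proof length shows that every theorem of $\C$ takes only designated values, so any formula receiving an undesignated value under some assignment cannot be a theorem.

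Having fixed such a matrix, the second step is to produce a witnessing assignment: values for $\alpha$ and $\beta$ under which the instance $\di{\negation{(\disjunction{\alpha}{\beta})}}{(\dc{\negation{\beta}}{\negation{\alpha}})}$ of $\ddk{14}$ comes out undesignated, thereby establishing $\Cunprovable{\ddk{14}}$. Since $\ddk{14}$ is a De Morgan-style principle---the negation of a disjunction discussively implying the order-swapped discussive conjunction of the negations---and since $\discussiveconjunctionsymbol$ is interpreted non-symmetrically in the $\Sfive$-based semantics of $\dtwo$ (the left conjunct lying under a possibility operator), it is precisely the interaction of the swapped conjuncts with that hidden $\lozenge$ that one expects to exploit; this suggests placing the asymmetry of the matrix so that $\dc{\negation{\beta}}{\negation{\alpha}}$ can fall outside the designated set while $\negation{(\disjunction{\alpha}{\beta})}$ remains designated.

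The real work, and the main obstacle, is finding the matrix itself. Verifying (i) and (ii) for a given candidate is a finite, mechanical check, but searching the space of small matrices for one that simultaneously validates all fifteen schemes of $\C$ while falsifying $\ddk{14}$ is a nontrivial combinatorial problem. In practice I would delegate this search to a finite model finder (as the paper's reliance on automated methods suggests), looking first among the three- and four-valued matrices that sufficed for Propositions~\ref{prop:ddk-10-unprovable}--\ref{prop:ddk-13-unprovable}. Once a suitable matrix and designated set are returned, I would record them in a table in the same format as Tables~\ref{tab:ddk-10-unprovable}--\ref{tab:ddk-13-unprovable}, declare which values count as provable, and leave the confirmation of (i) and (ii) as the finite but tedious bookkeeping.
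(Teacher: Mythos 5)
Your strategy is exactly the paper's: a finite logical matrix with a designated subset that validates every instance of $\ciuciura{1}$--$\ciuciura{15}$, for which modus ponens preserves designation, and in which some instance of $\ddk{14}$ receives an undesignated value. That much is the right idea, and your remark about the asymmetry of $\discussiveconjunctionsymbol$ is sensible motivation (indeed, the paper later notes that $\ddk{14}$ is not even $\dtwo$-valid).

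However, there is a genuine gap: you never produce the matrix. The entire mathematical content of the proposition is the \emph{existence} of such a countermodel; the soundness induction and the falsifying-assignment check are, as you yourself say, routine once the matrix is in hand. Saying that one would ``delegate this search to a finite model finder'' and record whatever it returns is a research plan, not a proof --- nothing in your text certifies that any matrix of any size with the required properties exists, and that is precisely what must be shown. Compare the paper's proof of Proposition~\ref{prop:ddk-14-unprovable}: it is nothing but the exhibited three-valued matrix of Table~\ref{tab:ddk-14-unprovable} (negation sending $1 \mapsto 3$, $2 \mapsto 1$, $3 \mapsto 1$, together with the tables for $\disjunctionsymbol$, $\discussiveconjunctionsymbol$, $\discussiveimplicationsymbol$), with values $1$ and $2$ designated and $3$ undesignated. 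Your proposal correctly predicts that a three-valued matrix suffices, but until one is written down (and the finite checks at least asserted to have been performed), the proposition remains unproved.
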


\begin{proof}
Consider the following matrices (Table~\ref{tab:ddk-14-unprovable}):
\begin{table}[h]
\centering\footnotesize
\begin{tabular}{c||c}
Value & Value of $\negationsymbol$\\
\hline
1 & 3\\
2 & 1\\
3 & 1\\
\end{tabular}
\begin{tabular}{c|c||c|c|c|c|}
Value & Value & $\disjunctionsymbol$ & $\discussiveconjunctionsymbol$ & $\discussiveimplicationsymbol$\\
\hline
1 & 1 & 2 & 1 & 1\\
1 & 2 & 2 & 1 & 1\\
1 & 3 & 1 & 3 & 3\\
2 & 1 & 2 & 1 & 1\\
2 & 2 & 2 & 2 & 2\\
2 & 3 & 2 & 3 & 3\\
3 & 1 & 1 & 3 & 1\\
3 & 2 & 2 & 3 & 1\\
3 & 3 & 3 & 3 & 1\\
\end{tabular}
\caption{\label{tab:ddk-14-unprovable}Matrices validating $\C$ in which $\ddk{14}$ is invalid}
\end{table}
We then declare that any formula having the values $1$ or $2$ according to these tables is provable; formulas having value $3$ are declared to be unprovable.
\end{proof}

\begin{prop}\label{prop:ddk-15-unprovable}$\Cunprovable{\ddk{15}}$\end{prop}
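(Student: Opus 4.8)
The plan is to establish $\Cunprovable{\ddk{15}}$ by the same finite-matrix argument used in Propositions~\ref{prop:ddk-10-unprovable}--\ref{prop:ddk-14-unprovable}. I would exhibit a finite algebra---a set of truth values equipped with operation tables interpreting $\negationsymbol$, $\disjunctionsymbol$, $\discussiveconjunctionsymbol$, and $\discussiveimplicationsymbol$---together with a designation of some proper subset of its values as ``provable'', and then verify three things: (i) every axiom schema $\ciuciura{1}$ through $\ciuciura{15}$ takes a designated value under every assignment of truth values to its schematic variables; (ii) modus ponens preserves designation, i.e.\ whenever both $\alpha$ and $\di{\alpha}{\beta}$ receive designated values, so does $\beta$; and (iii) some assignment to the variables $\alpha$ and $\beta$ of $\ddk{15}$ makes it take a non-designated value. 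Conditions (i) and (ii) guarantee that every theorem of $\C$ is designated under every assignment, so (iii) exhibits a formula that is not a theorem of $\C$.

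The only genuinely creative step is locating a suitable matrix, and here the constraints are sharper than in the earlier propositions because $\ddk{11}$, namely $\di{\negation{\negation{\alpha}}}{\alpha}$, is provable in $\C$. Thus the matrix must validate double-negation elimination in the sense that $\di{\negation{\negation{\alpha}}}{\alpha}$ is always designated, yet still break the pattern on which $\ddk{15}$ relies. I would therefore search for a value $a$ with $\negationsymbol\negationsymbol a \neq a$---double negation need not be the identity on values even though the implication $\di{\negation{\negation{\alpha}}}{\alpha}$ is designated---such that, for a suitable value $b$, the antecedent $\negation{(\disjunction{\negation{\negation{a}}}{b})}$ of $\ddk{15}$ comes out designated while its consequent $\negation{(\disjunction{a}{b})}$ does not, and the $\discussiveimplicationsymbol$ table sends this pair to a non-designated value. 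By analogy with the preceding cases a matrix on three or four values should suffice; in practice one locates it with a finite model finder that searches over candidate tables subject to the constraints (i)--(iii).

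The bulk of the labour, and the main obstacle once a candidate matrix is in hand, is the exhaustive verification of (i): each of the fifteen schemata must be checked over all assignments, which for an $n$-valued matrix and a schema in $k$ variables amounts to $n^{k}$ evaluations. This is entirely finite and mechanical, so I would discharge it by the same automated computation that produces the matrix and then present only the resulting tables and designation, exactly as in the surrounding propositions. Checking (ii) is an $n^{2}$ inspection of the implication table, and (iii) reduces to displaying the single falsifying assignment of values to $\alpha$ and $\beta$.
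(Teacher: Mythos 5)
Your method is exactly the paper's: a finite matrix over a small set of truth values, with a proper subset designated as ``provable'', such that every instance of every axiom scheme of $\C$ is designated, detachment for $\discussiveimplicationsymbol$ preserves designation, and some instance of $\ddk{15}$ is undesignated. Your conditions (i)--(iii) are the right ones, and your structural observation is apt: since $\ddk{11}$ is not among the schemes the paper claims unprovable in $\C$, any such matrix will automatically designate every instance of $\di{\negation{\negation{\alpha}}}{\alpha}$, yet negation need not be involutive on the values. The paper's own witness (Table~\ref{tab:ddk-15-unprovable}, with negation taken from Table~\ref{tab:ddk-10-unprovable}) has precisely the shape you predicted: three values, designated set $\{1,3\}$, and $\negation{\negation{1}}=2\neq 1$.

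The gap is that the proposal stops where the proof has to begin. For an unprovability result of this kind, the entire mathematical content is the witness itself: the concrete operation tables together with the designation. You describe how one would search for such a matrix (``a finite model finder'', ``three or four values should suffice by analogy'') but never exhibit one, so nothing is established. The existence of a matrix that simultaneously validates all fifteen schemata $\ciuciura{1}$--$\ciuciura{15}$, is closed under modus ponens, and falsifies an instance of $\ddk{15}$ is exactly the point at issue; it cannot be discharged by analogy with the neighboring propositions, since a priori $\ddk{15}$ might have been a theorem of $\C$ (it is, after all, $\dtwo$-valid, as the paper's Table~\ref{tab:valid-or-invalid} records, so no semantic argument can rule this out either). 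The paper's proof consists of nothing but the exhibited tables plus the implicit finite verification of your (i)--(iii); a complete proof attempt must contain them. As it stands you have the correct strategy and a plausible conjecture that it can be instantiated, but not a proof.
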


\begin{proof}
Consider the following matrices (Table~\ref{tab:ddk-15-unprovable}):
\begin{table}[h]
\centering\footnotesize
\begin{tabular}{c|c||c|c|c|c|}
Value & Value & $\disjunctionsymbol$ & $\discussiveconjunctionsymbol$ & $\discussiveimplicationsymbol$\\
\hline
1 & 1 & 3 & 1 & 1\\
1 & 2 & 1 & 2 & 2\\
1 & 3 & 3 & 1 & 3\\
2 & 1 & 1 & 2 & 3\\
2 & 2 & 2 & 2 & 1\\
2 & 3 & 3 & 2 & 3\\
3 & 1 & 1 & 3 & 1\\
3 & 2 & 3 & 2 & 2\\
3 & 3 & 1 & 3 & 3\\
\end{tabular}
\caption{\label{tab:ddk-15-unprovable}Matrices validating $\C$ in which $\ddk{15}$ is invalid.}
\end{table}
The matrix for negation is the same as the one appearing in Table~\ref{tab:ddk-10-unprovable}.  We then declare that any formula having the values $1$ or $3$ according to these tables is provable; formulas having value $2$ are declared to be unprovable.
\end{proof}

\begin{prop}\label{prop:ddk-16-unprovable}$\Cunprovable{\ddk{16}}$\end{prop}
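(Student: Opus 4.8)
The plan is to follow the same matrix (algebraic countermodel) strategy used in the preceding propositions: I will exhibit a finite matrix over a small set of truth values---I anticipate three values will suffice, as in Propositions~\ref{prop:ddk-10-unprovable}, \ref{prop:ddk-13-unprovable}, \ref{prop:ddk-14-unprovable}, and \ref{prop:ddk-15-unprovable}---together with interpretations of $\negationsymbol$, $\disjunctionsymbol$, $\discussiveconjunctionsymbol$, and $\discussiveimplicationsymbol$, and a designated subset of values standing for ``provable.'' The negation table may, as in Proposition~\ref{prop:ddk-15-unprovable}, be reusable from an earlier table rather than specified afresh.

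First I would verify that the matrix \emph{validates} $\C$: that is, that under every assignment of values to the propositional variables, each of the fifteen axiom schemes $\ciuciura{1}$ through $\ciuciura{15}$ evaluates to a designated value, and that modus ponens for $\discussiveimplicationsymbol$ preserves designatedness (whenever $\alpha$ and $\di{\alpha}{\beta}$ are both designated, so is $\beta$). Since the axioms are designated and the sole inference rule preserves designation, every theorem of $\C$ is designated. This is the soundness half of the argument, and it is a finite, mechanical check: each axiom is evaluated at every tuple of values, of which there are only finitely many.

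Second I would supply a concrete assignment to the variables $\alpha$, $\beta$, $\gamma$ under which $\ddk{16}$, namely $\di{(\negation{\di{(\disjunction{\alpha}{\beta})}{\gamma}})}{(\disjunction{(\di{\negation{\alpha}}{\beta})}{\gamma})}$, receives a non-designated value. Because every $\C$-theorem is designated but $\ddk{16}$ is not, it follows that $\ddk{16}$ is not a theorem of $\C$, which is exactly the claim.

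The main obstacle is finding the matrix itself, since it must meet two competing demands at once: it must validate all fifteen axioms of $\C$ while falsifying $\ddk{16}$. Because $\ddk{16}$ involves three distinct variables together with a nested implication occurring inside a negation, the space of separating assignments is more constrained than for the simpler disjunctive axioms, so the separating table is correspondingly harder to locate by hand. In practice this is carried out by an automated search over candidate tables (as signaled by the paper's emphasis on automated methods and matrices); once a suitable table is found, the remaining verification reduces to the routine evaluation described above. I would then present the resulting tables in the same format as the earlier propositions and record the designated values.
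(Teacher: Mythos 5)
Your plan is the same strategy the paper uses: produce a finite matrix with a set of designated values under which every axiom of $\C$ is designated and modus ponens preserves designation, then show that some instance of $\ddk{16}$ takes a non-designated value. But as written your proposal contains a genuine gap: you never exhibit the matrix or the falsifying assignment, and that witness \emph{is} the entire mathematical content of the proof. Everything else you describe (soundness of $\C$ relative to a validating matrix, the finiteness of the check) is routine scaffolding; the proposition is established only once a concrete table is on the page and can be verified. Saying that the table would be found by automated search is a statement about how one would go looking for a proof, not a proof --- in particular, it leaves open the possibility that no such matrix exists at all, which is precisely what is at issue.

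A secondary, more concrete problem: you anticipate that three truth values will suffice, by analogy with Propositions~\ref{prop:ddk-10-unprovable}, \ref{prop:ddk-13-unprovable}, \ref{prop:ddk-14-unprovable}, and~\ref{prop:ddk-15-unprovable}. The paper's countermodel for $\ddk{16}$ in fact uses \emph{four} values (as does the one for $\ddk{12}$), with negation $1 \mapsto 3$, $2 \mapsto 2$, $3 \mapsto 1$, $4 \mapsto 4$ and designated set $\{2,3,4\}$. So the analogy you lean on is unreliable for this axiom: $\ddk{16}$, with its three variables and an implication nested under negation, apparently needs a larger algebra to be separated from $\C$. If you restrict your search to three-valued tables, it may simply fail, and you would have no way of knowing whether the failure reflects the proposition being false or your search space being too small.
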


\begin{proof}
Consider the following matrices (Table~\ref{tab:ddk-16-unprovable}):
\begin{table}[h]
\centering\footnotesize
\begin{tabular}{c||c}
Value & Value of $\negationsymbol$\\
\hline
1 & 3\\
2 & 2\\
3 & 1\\
4 & 4\\
\end{tabular}
\begin{tabular}{c|c||c|c|c|c|}
Value & Value & $\disjunctionsymbol$ & $\discussiveconjunctionsymbol$ & $\discussiveimplicationsymbol$\\
\hline
1 & 1 & 1 & 1 & 4\\
1 & 2 & 2 & 1 & 2\\
1 & 3 & 4 & 1 & 2\\
1 & 4 & 4 & 1 & 2\\
2 & 1 & 3 & 1 & 1\\
2 & 2 & 4 & 2 & 4\\
2 & 3 & 3 & 2 & 2\\
2 & 4 & 2 & 4 & 2\\
3 & 1 & 4 & 1 & 1\\
3 & 2 & 3 & 3 & 2\\
3 & 3 & 3 & 3 & 2\\
3 & 4 & 2 & 2 & 4\\
4 & 1 & 4 & 1 & 1\\
4 & 2 & 2 & 2 & 4\\
4 & 3 & 4 & 2 & 3\\
4 & 4 & 4 & 2 & 3\\
\end{tabular}
\caption{\label{tab:ddk-16-unprovable}Matrices validating $\C$ in which $\ddk{16}$ is invalid}
\end{table}
We then declare that any formula having the values $2$, $3$, or $4$ according to these tables is provable; formulas having value $1$ declared to be unprovable.
\end{proof}

\begin{prop}\label{prop:ddk-17-unprovable}$\Cunprovable{\ddk{17}}$\end{prop}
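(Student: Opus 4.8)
The plan is to prove $\Cunprovable{\ddk{17}}$ by the same matrix method used in the preceding propositions. That is, I would exhibit a finite logical matrix---a finite carrier of truth values partitioned into designated and undesignated values, together with operation tables interpreting $\negationsymbol$, $\disjunctionsymbol$, $\discussiveconjunctionsymbol$, and $\discussiveimplicationsymbol$---that separates $\C$ from $\ddk{17}$. The separation requires three things: (i) every axiom $\ciuciura{1}$ through $\ciuciura{15}$ receives a designated value under every assignment of truth values to its schematic letters; (ii) the designated values are closed under detachment for $\discussiveimplicationsymbol$, so that whenever $\alpha$ and $\di{\alpha}{\beta}$ are both designated, $\beta$ is designated as well; and (iii) there is at least one assignment to $\alpha$, $\beta$, $\gamma$ under which $\ddk{17}$ receives an undesignated value. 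Conditions (i) and (ii) guarantee that every theorem of $\C$ is designated, while (iii) exhibits $\ddk{17}$ as a non-theorem.

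Concretely, I would first fix a small carrier (three or four values, as in the earlier tables, should suffice) together with a designated set, and then seek operation tables meeting the constraints. Once a candidate matrix is in hand, the verification of (i) and (ii) is a finite and purely mechanical check: for each axiom one runs through all assignments of values to its letters and confirms the output is designated, and for modus ponens one inspects the table for $\discussiveimplicationsymbol$ to confirm it never carries a designated antecedent and designated conditional to an undesignated conclusion. For (iii) it suffices to display the single offending assignment. Since $\ddk{17}$ is the associativity-of-disjunction-under-negation principle $\di{\negation{(\disjunction{(\disjunction{\alpha}{\beta})}{\gamma})}}{\negation{(\disjunction{\alpha}{(\disjunction{\beta}{\gamma})})}}$, the matrix must interpret $\disjunctionsymbol$ non-associatively (at least after composition with $\negationsymbol$) while still validating the disjunction axioms $\ciuciura{6}$--$\ciuciura{9}$ together with the more intricate negation--disjunction axioms $\ciuciura{10}$--$\ciuciura{15}$.

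The main obstacle is precisely the construction of such a matrix: the fifteen axioms of $\C$ impose many simultaneous constraints on the four operation tables, and it is far from obvious by hand that a small matrix satisfying all of them can also break the associativity principle of $\ddk{17}$. As the methodology of this paper suggests, the natural route is a finite-model search (for instance with a tool such as Mace4): one encodes $\C$-validity together with the negated target---that some assignment leaves $\ddk{17}$ undesignated---and lets the search return a witnessing matrix, which is then reported in a table exactly as in Tables~\ref{tab:ddk-10-unprovable}--\ref{tab:ddk-16-unprovable}. Once the search produces a candidate, the remaining work is the mechanical confirmation described above, which can itself be automated or checked by direct enumeration.
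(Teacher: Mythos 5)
Your setup is exactly right in spirit: the paper's own proof is by the matrix method you describe, and your three conditions --- (i) every instance of $\ciuciura{1}$--$\ciuciura{15}$ designated, (ii) designated values closed under detachment for $\discussiveimplicationsymbol$, (iii) some assignment leaving $\ddk{17}$ undesignated --- are precisely what a separating matrix must satisfy. You even anticipate the shape of the witness correctly: a small matrix (the paper uses three values) in which $\disjunctionsymbol$, composed with $\negationsymbol$, behaves non-associatively while the $\C$ axioms survive.

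But your proposal stops short of the one thing that constitutes the proof: the matrix itself. The existence of a finite matrix meeting your constraints is exactly the point at issue; saying that a model-finder such as Mace4 would return one is a research plan, not an argument. The paper's proof is nothing more and nothing less than the explicit witness: a three-element matrix with designated values $\{1,2\}$, negation given by $1 \mapsto 3$, $2 \mapsto 2$, $3 \mapsto 1$ (the same negation table as in Table~\ref{tab:ddk-13-unprovable}), and explicit tables for $\disjunctionsymbol$, $\discussiveconjunctionsymbol$, and $\discussiveimplicationsymbol$ under which some instance of $\ddk{17}$ takes the undesignated value $3$. Until such tables are displayed (and mechanically checked against (i)--(iii)), nothing has been established: for all your argument shows, $\ddk{17}$ might be a theorem of $\C$, in which case no matrix of any size could satisfy your three conditions simultaneously. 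Right method, but the witness --- which is the entire content of the proposition's proof --- is missing.
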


\begin{proof}
Consider the following matrices (Table~\ref{tab:ddk-17-unprovable}):
\begin{table}[h]
\centering\footnotesize
\begin{tabular}{c|c||c|c|c|c|}
Value & Value & $\disjunctionsymbol$ & $\discussiveconjunctionsymbol$ & $\discussiveimplicationsymbol$\\
\hline
1 & 1 & 2 & 1 & 1\\
1 & 2 & 1 & 1 & 1\\
1 & 3 & 1 & 3 & 3\\
2 & 1 & 1 & 1 & 1\\
2 & 2 & 1 & 2 & 1\\
2 & 3 & 2 & 3 & 3\\
3 & 1 & 1 & 3 & 2\\
3 & 2 & 2 & 3 & 1\\
3 & 3 & 3 & 3 & 1\\
\end{tabular}
\caption{\label{tab:ddk-17-unprovable}Matrices validating $\C$ in which $\ddk{17}$ is invalid}
\end{table}
The matrix for negation is the same as the one used in Table~\ref{tab:ddk-13-unprovable}. We then declare that any formula having the values $1$ or $2$ according to these tables is provable; formulas having value $3$ are declared to be unprovable.
\end{proof}

\begin{prop}\label{prop:ddk-18-unprovable}$\Cunprovable{\ddk{18}}$\end{prop}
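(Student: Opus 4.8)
The plan is to follow exactly the matrix-theoretic method used in the preceding propositions. To witness $\Cunprovable{\ddk{18}}$ it suffices to exhibit a finite algebra $\mathfrak{M}$ --- a carrier set of truth values equipped with operations interpreting $\negationsymbol$, $\disjunctionsymbol$, $\discussiveconjunctionsymbol$, and $\discussiveimplicationsymbol$ --- together with a set $\mathcal{D}$ of designated values, such that (i) every instance of each axiom $\ciuciura{1}$ through $\ciuciura{15}$ evaluates into $\mathcal{D}$ under every assignment of values to its schematic letters, (ii) the operation interpreting $\discussiveimplicationsymbol$ preserves designation in the sense required by modus ponens (whenever $\varphi$ and $\di{\varphi}{\psi}$ are designated, so is $\psi$), and (iii) some assignment to $\alpha,\beta,\gamma$ drives $\ddk{18}$ out of $\mathcal{D}$. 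Conditions (i) and (ii) guarantee that every theorem of $\C$ is designated, so (iii) then shows that $\ddk{18}$ is not among them.

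Given such a matrix, the verification is entirely mechanical and finite. First I would confirm (ii): modus ponens preservation is a condition on the implication table alone, checked by inspecting, for each pair of values $x,y$ with $x \in \mathcal{D}$ and $x \discussiveimplicationsymbol y \in \mathcal{D}$, that $y \in \mathcal{D}$. Next I would verify (i) by evaluating each of the fifteen axiom schemas on all assignments of the (at most three) schematic variables to the carrier; for a three-valued carrier this is at most $3^3 = 27$ rows per schema, and for a four-valued carrier at most $4^3 = 64$. Finally, for (iii), since $\ddk{18}$ has the form $\di{A}{C}$ with $A = \negation{(\disjunction{(\di{\alpha}{\beta})}{\gamma})}$ and $C = \dc{\alpha}{\negation{(\disjunction{\beta}{\gamma})}}$, I would look for an assignment under which $A$ takes some value $x$ and $C$ some value $y$ with $x \discussiveimplicationsymbol y \notin \mathcal{D}$, and simply read off the offending line.

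The hard part will be producing the matrix itself; once it is in hand, everything reduces to the table-checking above. The structure of $\ddk{18}$ offers some guidance: to falsify this discussive conditional I want the antecedent's negated disjunction to behave one way while the consequent conjunction fails, so the negation and disjunction tables must be arranged to decouple $\negation{(\disjunction{(\di{\alpha}{\beta})}{\gamma})}$ from $\dc{\alpha}{\negation{(\disjunction{\beta}{\gamma})}}$ in at least one row, all without disturbing any of $\ciuciura{1}$--$\ciuciura{15}$ or the modus ponens condition. In practice, as with the earlier matrices in this section, the cleanest route is an automated search over small candidate algebras that rejects any table violating an axiom of $\C$ or the modus ponens condition and retains only those that additionally refute $\ddk{18}$; the separation between $\C$ and $\D$ being by now familiar, I expect a three- or four-element matrix to suffice, much as in the neighbouring propositions.
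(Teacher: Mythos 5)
Your general strategy is exactly the one the paper uses: exhibit a finite matrix with a set of designated values under which every axiom of $\C$ is designated on all assignments, modus ponens preserves designation, and some instance of $\ddk{18}$ is undesignated. Your three verification conditions (i)--(iii) are the right ones, and your observation that the check is finite and mechanical is correct.

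The problem is that everything you have written is a description of what a proof would look like, not a proof. The entire mathematical content of this proposition \emph{is} the witness matrix, and you defer precisely that: ``the hard part will be producing the matrix itself.'' An expectation that ``a three- or four-element matrix will suffice,'' or an appeal to an automated search one has not run, establishes nothing --- if $\ddk{18}$ were in fact a theorem of $\C$, no such matrix of any size would exist, and nothing in your proposal rules that out. (Note that one cannot even fall back on semantic invalidity here: the paper records the $\dtwo$-validity of $\ddk{18}$ as unsettled in Table~\ref{tab:valid-or-invalid}, so there is no indirect argument available; the matrix is the only evidence.) The paper's proof consists of a concrete three-element matrix (Table~\ref{tab:ddk-18-unprovable}, with the negation table of Table~\ref{tab:ddk-14-unprovable} and designated values $1$ and $2$) whose tables one can then verify against conditions (i)--(iii). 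Until you produce such a table --- by hand or by actually running the search you describe --- and report it, the proposition remains unproved.
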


\begin{proof}
Consider the following matrices (Table~\ref{tab:ddk-18-unprovable}):
\begin{table}[h]
\centering\footnotesize
\begin{tabular}{c|c||c|c|c|c|}
Value & Value & $\disjunctionsymbol$ & $\discussiveconjunctionsymbol$ & $\discussiveimplicationsymbol$\\
\hline
1 & 1 & 2 & 1 & 1\\
1 & 2 & 2 & 2 & 1\\
1 & 3 & 1 & 3 & 3\\
2 & 1 & 1 & 2 & 1\\
2 & 2 & 1 & 1 & 1\\
2 & 3 & 2 & 3 & 3\\
3 & 1 & 1 & 3 & 2\\
3 & 2 & 2 & 3 & 2\\
3 & 3 & 3 & 3 & 2\\
\end{tabular}
\caption{\label{tab:ddk-18-unprovable}Matrices validating $\C$ in which $\ddk{18}$ is invalid.  The matrix for negation is the same as the one appearing in Table~\ref{tab:ddk-14-unprovable}.}
\end{table}
We then declare that any formula having the values $1$ or $2$ according to these tables is provable; formulas having value $3$ are declared to be unprovable.
\end{proof}

\begin{prop}\label{prop:ddk-19-unprovable}$\Cunprovable{\ddk{19}}$\end{prop}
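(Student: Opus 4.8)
The plan is to reuse the finite-matrix (matrix-semantics) technique that underlies all of the preceding propositions. I would exhibit a finite set $V$ of truth values, together with operations on $V$ interpreting $\negationsymbol$, $\disjunctionsymbol$, $\discussiveconjunctionsymbol$, and $\discussiveimplicationsymbol$, and a designated subset $W \subseteq V$ of ``provable'' values, such that three conditions hold: (i) every axiom $\ciuciura{1}$ through $\ciuciura{15}$ evaluates into $W$ under every assignment of elements of $V$ to its schematic letters; (ii) modus ponens for $\discussiveimplicationsymbol$ preserves designation, i.e.\ whenever $\alpha$ and $\di{\alpha}{\beta}$ both land in $W$, so does $\beta$; and (iii) there is at least one assignment to $\alpha$, $\beta$, $\gamma$ under which $\ddk{19}$, namely
\[
\di{\negation{(\disjunction{(\dc{\alpha}{\beta})}{\gamma})}}{(\di{\alpha}{\negation{(\disjunction{\beta}{\gamma})}})},
\]
receives a value outside $W$. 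Conditions (i) and (ii) together force every theorem of $\C$ to be designated, so (iii) yields $\Cunprovable{\ddk{19}}$ at once. It is worth noting that we already have independent reason to expect success: Ciuciura flagged $\ddk{19}$ as invalid in the intended semantics of $\dtwo$, so if $\C$ is sound for that semantics then $\ddk{19}$ cannot be a theorem. The matrix merely repackages this as a self-contained, purely algebraic witness that makes no appeal to the intended semantics.

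Concretely, I would first fix the cardinality of $V$. In light of the three-valued matrices that sufficed for the structurally similar negation-and-disjunction axioms $\ddk{13}$, $\ddk{14}$, $\ddk{17}$, and $\ddk{18}$, I would begin the search at three values and pass to four (as was needed for $\ddk{12}$ and $\ddk{16}$) only if no three-valued separator exists. Given a candidate matrix, the designated set $W$ is pinned down by the requirement that $\ddk{19}$ fail; since $\ddk{19}$ is an implication, the falsifying assignment must send its antecedent $\negation{(\disjunction{(\dc{\alpha}{\beta})}{\gamma})}$ into $W$ while driving the consequent $\di{\alpha}{\negation{(\disjunction{\beta}{\gamma})}}$ out of it. The remaining work is the exhaustive verification of (i) and (ii): each axiom involves at most three distinct schematic letters, so each check is an evaluation of a fixed term over all $|V|^{3}$ assignments, a finite computation best delegated to the finite-model search alluded to in the abstract. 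Once a matrix passing (i) and (ii) is located, I would display it in a table in the same format as the earlier propositions and record the witnessing assignment for $\ddk{19}$.

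The main obstacle is the \emph{search} for the matrix, not its verification, which is mechanical. The axioms $\ciuciura{10}$ through $\ciuciura{15}$ are long and deeply nested in $\negationsymbol$, $\disjunctionsymbol$, and $\discussiveconjunctionsymbol$, so the constraints they impose on the operation tables are severe; meeting all of them while still leaving enough slack to falsify $\ddk{19}$ may well force a four-valued matrix with a carefully chosen $W$. A secondary subtlety is the coupling of conditions (i) and (ii): a matrix can validate every axiom yet fail to be closed under modus ponens, so $W$ cannot be selected freely but must be compatible with the $\discussiveimplicationsymbol$-table. I expect, as in every earlier case, that a suitable small matrix exists and that the only genuine difficulty is finding it.
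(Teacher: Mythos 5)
Your strategy is exactly the one the paper uses: a finite logical matrix whose designated values contain every instance of $\ciuciura{1}$--$\ciuciura{15}$, whose $\discussiveimplicationsymbol$-table preserves designation under modus ponens, and which assigns some instance of $\ddk{19}$ an undesignated value. However, your submission stops at the point where the actual mathematical content begins: you never exhibit the matrix. The framework you describe (conditions (i)--(iii) and why they jointly yield $\Cunprovable{\ddk{19}}$) is entirely standard, and the paper spends no words justifying it; the whole proof \emph{is} the witness. As it stands, your argument establishes only the conditional claim ``if such a matrix exists, then $\ddk{19}$ is unprovable in $\C$,'' together with a heuristic expectation that one exists. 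That expectation is not self-certifying --- indeed your own discussion concedes the constraints from $\ciuciura{10}$--$\ciuciura{15}$ are severe and might not leave room to falsify $\ddk{19}$ --- so the gap is genuine, not cosmetic. For the record, the paper's witness is three-valued, not four-valued as you suspected might be necessary: it uses values $\{1,2,3\}$ with designated set $\{1,2\}$, the negation table $1\mapsto 3$, $2\mapsto 1$, $3\mapsto 1$ (shared with its $\ddk{14}$ countermodel), and bespoke tables for $\disjunctionsymbol$, $\discussiveconjunctionsymbol$, $\discussiveimplicationsymbol$.

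A smaller conceptual slip: you assert that the falsifying assignment ``must'' send the antecedent $\negation{(\disjunction{(\dc{\alpha}{\beta})}{\gamma})}$ into the designated set $W$ and the consequent out of it. In an arbitrary matrix this is neither necessary nor sufficient for the implication to be undesignated --- the $\discussiveimplicationsymbol$-table is just a table, and all that matters is the value it assigns to the pair of subformula values. Indeed, the paper's tables are nothing like a classical implication (e.g.\ the entry for arguments $(1,1)$ under $\discussiveimplicationsymbol$ is $1$, while for $(2,2)$ it is $2$). Treating designation of antecedent/consequent as the search criterion would prune away valid countermodels, including possibly the one the paper found.
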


\begin{proof}
Consider the following matrices (Table~\ref{tab:ddk-19-unprovable}):
\begin{table}[h]
\centering\footnotesize
\begin{tabular}{c|c||c|c|c|c|}
Value & Value & $\disjunctionsymbol$ & $\discussiveconjunctionsymbol$ & $\discussiveimplicationsymbol$\\
\hline
1 & 1 & 2 & 1 & 1\\
1 & 2 & 2 & 1 & 1\\
1 & 3 & 1 & 3 & 3\\
2 & 1 & 1 & 2 & 2\\
2 & 2 & 1 & 2 & 2\\
2 & 3 & 2 & 3 & 3\\
3 & 1 & 1 & 3 & 2\\
3 & 2 & 2 & 3 & 1\\
3 & 3 & 3 & 3 & 1\\
\end{tabular}
\caption{\label{tab:ddk-19-unprovable}Matrices validating $\C$ in which $\ddk{19}$ is invalid. The matrix for negation is the same as the one appearing in Table~\ref{tab:ddk-14-unprovable}.}
\end{table}
We then declare that any formula having the values $1$ or $2$ according to these tables is provable; formulas having value $3$ are declared to be unprovable.
\end{proof}

\begin{prop}\label{prop:ddk-20-unprovable}$\Cunprovable{\ddk{20}}$\end{prop}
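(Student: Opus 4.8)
The plan is to establish this in the same way as Propositions~\ref{prop:ddk-10-unprovable}--\ref{prop:ddk-19-unprovable}: by exhibiting a finite logical matrix that validates every axiom of $\C$ and is preserved under modus ponens for $\discussiveimplicationsymbol$, yet assigns an undesignated value to some instance of $\ddk{20}$. Concretely, I would fix a set of truth values together with tables for $\negationsymbol$, $\disjunctionsymbol$, $\discussiveconjunctionsymbol$, and $\discussiveimplicationsymbol$, and a distinguished set of designated (``provable'') values, and then verify two things: first, that each of $\ciuciura{1}$ through $\ciuciura{15}$ takes a designated value under every assignment of values to its schematic letters; and second, that whenever $\alpha$ and $\di{\alpha}{\beta}$ are both designated, so is $\beta$. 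These two conditions together guarantee that every $\C$-theorem is designated, so producing a single assignment under which $\ddk{20}$ is undesignated suffices to conclude $\Cunprovable{\ddk{20}}$.

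The first step is locating the matrix itself. Because $\ddk{20}$, namely \[\di{\negation{(\disjunction{\negation{(\disjunction{\alpha}{\beta})}}{\gamma})}}{(\disjunction{\negation{(\disjunction{\negation{\alpha}}{\gamma})}}{\negation{(\disjunction{\negation{\beta}}{\gamma})}})},\] is among the most complex axioms in the list---three schematic letters nested under several layers of negation and disjunction---I expect that two or three values will not separate it from $\C$, and that a four-valued matrix (as in the proofs for $\ddk{12}$ and $\ddk{16}$) will likely be required. As in the earlier cases, I would obtain the matrix by an automated finite-model search: enumerate candidate tables over a small value set, discard any that falsify one of $\ciuciura{1}$--$\ciuciura{15}$ or break modus ponens, and retain one whose surviving tables admit a three-variable assignment sending $\ddk{20}$ to an undesignated value.

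Once a candidate matrix is in hand, the verification is mechanical but voluminous: checking the first condition amounts to confirming that fifteen schemata---several of which, namely $\ciuciura{10}$--$\ciuciura{15}$, are themselves deeply nested---evaluate to designated values across all assignments (at most $n^3$ rows each for an $n$-valued matrix), and checking the second is a single pass over the $\discussiveimplicationsymbol$-table. The refutation of $\ddk{20}$ is then read off from one specific row. I anticipate that the genuine difficulty lies entirely in the search of the first step: the refuting axiom is large enough that the space of separating matrices is sparse, while the soundness constraint imposed by the fifteen $\C$-axioms is stringent, so the matrix may need to be somewhat larger or more delicately tuned than those in the preceding propositions.
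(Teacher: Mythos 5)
Your proposal correctly identifies the method---the same matrix method the paper itself uses throughout Section~\ref{sec:unprovable-in-c}---but it stops exactly where the proof has to begin: you never exhibit the matrix. The entire mathematical content of an unprovability claim like $\Cunprovable{\ddk{20}}$ established this way is the concrete witness: explicit tables for $\negationsymbol$, $\disjunctionsymbol$, $\discussiveconjunctionsymbol$, and $\discussiveimplicationsymbol$ over a fixed value set, together with a choice of designated values, against which one can then mechanically confirm that every instance of $\ciuciura{1}$--$\ciuciura{15}$ is designated, that modus ponens preserves designation, and that some assignment sends $\ddk{20}$ to an undesignated value. Saying you \emph{would} find such a matrix by automated search does not establish that one exists---its existence is precisely the proposition at issue---so what you have written is a plan for producing a proof rather than a proof. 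The two verification conditions you state are the right ones; the missing ingredient is the object they are to be verified against.

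A secondary point: your size estimate is wrong in an instructive way. The paper's proof uses a \emph{three}-valued matrix (values $\{1,2,3\}$, with $1$ undesignated and $2,3$ designated, and negation given by $1 \mapsto 3$, $2 \mapsto 3$, $3 \mapsto 1$), so the syntactic complexity of $\ddk{20}$---three schematic letters under nested negations and disjunctions---does not in fact force four values, contrary to your expectation that the cases of $\ddk{12}$ and $\ddk{16}$ are the right precedent. This is worth internalizing: the depth of the formula to be refuted is a poor predictor of the size of the separating matrix, and had you run the search you propose, restricting attention to four-valued candidates would have made it needlessly expensive.
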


\begin{proof}
Consider the following matrices (Table~\ref{tab:ddk-20-unprovable}):
\begin{table}[h]
\centering\footnotesize
\begin{tabular}{c||c}
Value & Value of $\negationsymbol$\\
\hline
1 & 3\\
2 & 3\\
3 & 1\\
\end{tabular}
\begin{tabular}{c|c||c|c|c|c|}
Value & Value & $\disjunctionsymbol$ & $\discussiveconjunctionsymbol$ & $\discussiveimplicationsymbol$\\
\hline
1 & 1 & 1 & 1 & 2\\
1 & 2 & 2 & 1 & 3\\
1 & 3 & 3 & 1 & 2\\
2 & 1 & 2 & 1 & 1\\
2 & 2 & 3 & 2 & 3\\
2 & 3 & 3 & 2 & 2\\
3 & 1 & 3 & 1 & 1\\
3 & 2 & 3 & 2 & 2\\
3 & 3 & 2 & 3 & 2\\
\end{tabular}
\caption{\label{tab:ddk-20-unprovable}Matrices validating $\C$ in which $\ddk{20}$ is invalid}
\end{table}
We then declare that any formula having the values $2$ or $3$ according to these tables is provable; formulas having value $1$ are declared to be unprovable.
\end{proof}

\begin{prop}\label{prop:ddk-21-unprovable}$\Cunprovable{\ddk{21}}$\end{prop}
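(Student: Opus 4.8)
The plan is to use the method of logical matrices, exactly as in the preceding Propositions. First I would construct a finite matrix $\mathfrak{M}$ --- a finite carrier set $V$ of truth values together with operations interpreting $\negationsymbol$, $\disjunctionsymbol$, $\discussiveconjunctionsymbol$, and $\discussiveimplicationsymbol$ --- and single out a designated subset $D \subseteq V$ of ``provable'' values. The soundness direction becomes automatic once two things are verified: (i) every axiom $\ciuciura{1}$ through $\ciuciura{15}$ takes a value in $D$ under every assignment of elements of $V$ to its propositional variables, and (ii) $D$ is closed under detachment for $\discussiveimplicationsymbol$, i.e.\ whenever $a \in D$ and $\di{a}{b} \in D$ then $b \in D$. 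Given (i) and (ii), an easy induction on the length of a $\C$-proof shows that every theorem of $\C$ is designated under every assignment; this is the single lemma that underlies all the Propositions in this section.

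Second, to separate $\ddk{21}$ from $\C$ I would exhibit one assignment to $\alpha$, $\beta$, $\gamma$ under which the value of
\[
\di{\negation{(\disjunction{\negation{(\di{\alpha}{\beta})}}{\gamma})}}{(\di{\alpha}{\negation{(\disjunction{\negation{\beta}}{\gamma})}})}
\]
lands outside $D$. Since $\ddk{21}$ is a single formula, it suffices to compute its value on one unfavorable triple once a suitable matrix is in hand; by the soundness observation above, a non-designated value immediately certifies $\Cunprovable{\ddk{21}}$.

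Guided by the matrices already used for the structurally similar axioms $\ddk{18}$ and $\ddk{19}$, I would begin the search among three-valued matrices over $\{1,2,3\}$ with designated set $\{1,2\}$ and with the negation table $1 \mapsto 3$, $2 \mapsto 1$, $3 \mapsto 1$ inherited from Table~\ref{tab:ddk-14-unprovable}, so that only the binary tables for $\disjunctionsymbol$, $\discussiveconjunctionsymbol$, and $\discussiveimplicationsymbol$ need adjustment. If no three-valued matrix simultaneously validating all of $\C$ and refuting $\ddk{21}$ turns up, I would enlarge the carrier to four values, as proved necessary for $\ddk{12}$ and $\ddk{16}$.

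The main obstacle is precisely this search: producing binary tables that validate all fifteen axioms of $\C$ together with the detachment rule while still refuting $\ddk{21}$ is a constraint-satisfaction problem over the finitely many candidate tables of a fixed size. I expect to discharge it by automated finite-model search, in keeping with the paper's emphasis on automated theorem proving, countermodels, and matrices; once candidate tables are returned, the verification that they really do send every axiom into $D$ is a routine but lengthy finite computation, which I would either delegate to a machine or record explicitly in a table in the style of the earlier proofs.
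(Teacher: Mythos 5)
Your general strategy is exactly the paper's: fix a finite matrix with a designated set $D$ of values, check that every axiom of $\C$ is designated under every assignment and that detachment for $\discussiveimplicationsymbol$ preserves designation, and then exhibit an assignment under which $\ddk{21}$ takes a non-designated value. The soundness induction you describe is indeed the (implicit) lemma underlying every Proposition in Section~\ref{sec:unprovable-in-c}, and your observation that a single unfavorable triple suffices is correct.

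The gap is that you never actually produce the matrix, and in a proof by countermodel the countermodel \emph{is} the proof; everything else is routine. Your proposal ends exactly where the mathematical work begins: you state that you expect an automated finite-model search to turn up suitable tables, but until that search succeeds there is no argument that $\Cunprovable{\ddk{21}}$ --- for all your proposal establishes, every matrix validating $\C$ might also validate $\ddk{21}$. This is not a pedantic worry: the paper itself (Table~\ref{tab:valid-or-invalid}) cannot even settle whether $\ddk{21}$ is $\dtwo$-valid, so there is no shortcut around exhibiting an explicit separating matrix. For the record, the paper settles the question with a three-valued matrix over $\{1,2,3\}$ with designated set $\{1,2\}$, using the negation table of Table~\ref{tab:ddk-13-unprovable} (namely $1\mapsto 3$, $2\mapsto 2$, $3\mapsto 1$) --- not, as you guessed, the negation of Table~\ref{tab:ddk-14-unprovable} --- together with the binary tables given in Table~\ref{tab:ddk-21-unprovable}. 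So your plan, if executed, would very likely have succeeded at the three-valued stage; but as written it is a search procedure plus a promise, not a proof.
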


\begin{proof}
Consider the following matrices (Table~\ref{tab:ddk-21-unprovable}):
\begin{table}[h]
\centering\footnotesize
\begin{tabular}{c|c||c|c|c|c|}
Value & Value & $\disjunctionsymbol$ & $\discussiveconjunctionsymbol$ & $\discussiveimplicationsymbol$\\
\hline
1 & 1 & 2 & 1 & 2\\
1 & 2 & 2 & 1 & 1\\
1 & 3 & 1 & 3 & 3\\
2 & 1 & 2 & 1 & 2\\
2 & 2 & 1 & 2 & 1\\
2 & 3 & 2 & 3 & 3\\
3 & 1 & 1 & 3 & 2\\
3 & 2 & 2 & 3 & 1\\
3 & 3 & 3 & 3 & 2\\
\end{tabular}
\caption{\label{tab:ddk-21-unprovable}Matrices validating $\C$ in which $\ddk{21}$ is invalid.  The matrix for negation is the same as the one appearing in Table~\ref{tab:ddk-13-unprovable}.}
\end{table}
We then declare that any formula having the values $1$ or $2$ according to these tables is provable; formulas having value $3$ are declared to be unprovable.
\end{proof}

\begin{prop}\label{prop:ddk-22-unprovable}$\Cunprovable{\ddk{22}}$\end{prop}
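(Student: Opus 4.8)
The plan is to proceed exactly as in Propositions~\ref{prop:ddk-10-unprovable}--\ref{prop:ddk-21-unprovable}, by exhibiting a finite logical matrix that \emph{validates} every axiom of $\C$ while \emph{falsifying} $\ddk{22}$. Concretely, I would fix a finite set of truth values together with a distinguished (``designated'', i.e.\ provable) subset, and define operation tables for $\negationsymbol$, $\disjunctionsymbol$, $\discussiveconjunctionsymbol$, and $\discussiveimplicationsymbol$. The soundness half of the argument has two ingredients: first, that each of the fifteen schemes $\ciuciura{1}$--$\ciuciura{15}$ takes a designated value under \emph{every} assignment of values to its schematic variables; and second, that the designated set is closed under modus ponens for $\discussiveimplicationsymbol$, that is, whenever $v(\alpha)$ and $v(\di{\alpha}{\beta})$ are both designated, so is $v(\beta)$. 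Together these guarantee, by induction on the length of a derivation, that every $\C$-theorem is designated under every assignment.

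Given such a matrix, the proposition follows once I produce a single assignment to the three variables $\alpha$, $\beta$, $\gamma$ that drives the value of $\ddk{22}$, namely $\di{\negation{(\disjunction{\negation{(\dc{\alpha}{\beta})}}{\gamma})}}{(\dc{\alpha}{\negation{(\disjunction{\negation{\beta}}{\gamma})}})}$, into the undesignated region. Since $\ddk{22}$ would have to be designated-valid were it a $\C$-theorem, this separation witnesses $\Cunprovable{\ddk{22}}$. As in the companion propositions I expect three truth values to suffice; if not, I would pass to four values, reusing one of the four-valued negation tables already introduced (e.g.\ the one from Table~\ref{tab:ddk-12-unprovable} or Table~\ref{tab:ddk-16-unprovable}).

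Operationally I would carry this out in the following order. First I would read off the syntactic shape of $\ddk{22}$ and note its close kinship to $\ddk{19}$ --- both have the form $\di{\negation{(\disjunction{\negation{X}}{\gamma})}}{Y}$ with $X$ a discussive conjunction --- so the matrix used for $\ddk{19}$ (Table~\ref{tab:ddk-19-unprovable}) is a natural starting seed. Second, I would search the space of candidate tables (by hand for three values, or with a finite-model finder of the kind alluded to in the keywords) for one that keeps all of $\ciuciura{1}$--$\ciuciura{15}$ designated while letting $\ddk{22}$ escape. Third, I would verify soundness of $\C$ in the chosen matrix by evaluating all fifteen schemes over every variable assignment and checking that modus ponens preserves designation. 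Finally I would record the falsifying assignment for $\ddk{22}$.

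The hard part will be the search in the second step: there is no difficulty in \emph{checking} a proposed matrix, only in \emph{finding} one, since the constraints ``all fifteen $\C$-axioms designated'' and ``$\ddk{22}$ not designated'' are delicate to satisfy simultaneously, and the table for $\discussiveimplicationsymbol$ together with the choice of designated set must be arranged so that modus ponens does not leak. Once a separating matrix is in hand, the remaining verification is entirely mechanical --- a finite, if tedious, computation over the value tables.
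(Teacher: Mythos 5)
Your plan is, method-wise, exactly the paper's: exhibit a finite logical matrix with a designated (``provable'') subset of values under which all of $\ciuciura{1}$--$\ciuciura{15}$ are designated under every assignment, modus ponens preserves designation, and some instance of $\ddk{22}$ takes an undesignated value. Your expectation that three truth values suffice is also borne out: the paper's proof uses a three-valued matrix (Table~\ref{tab:ddk-22-unprovable}) with designated values $1$ and $2$, undesignated value $3$, and the same negation table as in Table~\ref{tab:ddk-13-unprovable} --- and indeed it is a close cousin of the matrices used for $\ddk{19}$ and $\ddk{21}$, just as you anticipated.

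The genuine gap is that you never produce the matrix. The entire mathematical content of this proposition is the existence of a concrete separating matrix; everything else (checking the fifteen schemes, checking closure under modus ponens, finding a falsifying assignment for $\ddk{22}$) is, as you yourself say, a finite mechanical verification. A description of a search procedure, together with the hope that the search succeeds, does not establish $\Cunprovable{\ddk{22}}$: a priori the search could fail for every finite matrix, and unprovability would then have to be shown by some other means entirely. Put differently, your proposal reduces the proposition to the claim ``a suitable matrix exists,'' which is precisely the claim at issue. To close the gap you must either display explicit tables for $\negationsymbol$, $\disjunctionsymbol$, $\discussiveconjunctionsymbol$, $\discussiveimplicationsymbol$ together with the designated set and the falsifying assignment (as the paper does), or give some non-constructive argument for the existence of such a matrix --- and no such argument appears in your proposal.
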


\begin{proof}
Consider the following matrices (Table~\ref{tab:ddk-22-unprovable}):
\begin{table}[h]
\centering\footnotesize
\begin{tabular}{c|c||c|c|c|c|}
Value & Value & $\disjunctionsymbol$ & $\discussiveconjunctionsymbol$ & $\discussiveimplicationsymbol$\\
\hline
1 & 1 & 2 & 1 & 2\\
1 & 2 & 1 & 2 & 1\\
1 & 3 & 1 & 3 & 3\\
2 & 1 & 1 & 1 & 2\\
2 & 2 & 1 & 1 & 1\\
2 & 3 & 2 & 3 & 3\\
3 & 1 & 1 & 3 & 2\\
3 & 2 & 2 & 3 & 1\\
3 & 3 & 3 & 3 & 2\\
\end{tabular}
\caption{\label{tab:ddk-22-unprovable}Matrices validating $\C$ in which $\ddk{22}$ is invalid.  The matrix for negation is the same as the one appearing in Table~\ref{tab:ddk-13-unprovable}.}
\end{table}
We then declare that any formula having the values $1$ or $2$ according to these tables is provable; formulas having value $3$ are declared to be unprovable.
\end{proof}

\subsection{$\Sfive$-(in)validities among the $\D$ axioms}\label{sec:}

Section~\ref{sec:unprovable-in-c} presents several axiom (schemes) of $\D$ cannot be proved in $\C$ (that is, for each of several of $\D$'s axiom schemes, at least one instance of the scheme is not provable in $\C$).  As pointed out by Ciuciura, the fact that some of these theorems are invalid according to the intended semantics shows that their $\C$-unprovability is to be expected (with the understanding, of course, that $\C$ is supposed to be an axiomatization of $\dtwo$).  But it is too quick to conclude that this diagnosis applies to all of the $\C$-unprovable axioms of $\D$.  Indeed, some of sentences appearing in Proposition~\ref{prop:ddk-10-unprovable}--\ref{prop:ddk-22-unprovable} are in fact valid in the intended semantics of $\dtwo$.  Table~\ref{tab:valid-or-invalid} separates the two.

\begin{table}[h]
\centering\footnotesize
\begin{tabular}{|c|c|}
$\D$-axiom & $\dtwo$-valid?\\
\hline
$\ddk{10}$ & + \\
$\ddk{12}$ & - \\
$\ddk{13}$ & - \\
$\ddk{14}$ & - \\
$\ddk{15}$ & + \\
$\ddk{16}$ & - \\
$\ddk{17}$ & + \\
$\ddk{18}$ & ? \\
$\ddk{19}$ & - \\
$\ddk{20}$ & + \\
$\ddk{21}$ & ? \\
$\ddk{22}$ & - \\
\end{tabular}
\caption{\label{tab:valid-or-invalid}$\Sfive$ validities and invalidities among the $\D$ axioms.  ``?'' indicates that we were unable to settle the validity question, owing to the complexity of the (translated) formulas.}
\end{table}

\begin{prop}\label{prop:s5}
If $\phi$ and $\psi$ are $\Sfive$-valid formulas such that $\equivalence{\phi}{\psi}$ is a tautology, then $\possibly{(\implication{\possibly{\phi}}{\psi})}$ is $\Sfive$-valid.
\end{prop}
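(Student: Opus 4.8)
The plan is to argue directly from the Kripke semantics for $\Sfive$, under which a formula is $\Sfive$-valid precisely when it is true at every world of every model whose accessibility relation is an equivalence relation (and hence, in particular, reflexive). First I would fix an arbitrary such model together with an arbitrary world $w$, with the goal of establishing $\truthifies{w}{\possibly{(\implication{\possibly{\phi}}{\psi})}}$. The crucial observation is that the consequent $\psi$ of the inner implication is itself $\Sfive$-valid, so it is in particular true at the very world $w$ under consideration.

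From here the steps are short. Since $\psi$ is $\Sfive$-valid we have $\truthifies{w}{\psi}$, and therefore the material implication $\implication{\possibly{\phi}}{\psi}$ is true at $w$ irrespective of whether $\possibly{\phi}$ holds there; that is, $\truthifies{w}{\implication{\possibly{\phi}}{\psi}}$. Because the accessibility relation is reflexive, $w$ is accessible from itself, so taking $w$ as the witness for the outer diamond yields $\truthifies{w}{\possibly{(\implication{\possibly{\phi}}{\psi})}}$. As the model and the world $w$ were arbitrary, $\possibly{(\implication{\possibly{\phi}}{\psi})}$ is $\Sfive$-valid, as required.

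The main point to flag is that there is essentially no obstacle here: once one records that $\psi$ holds at every world, the formula reduces to a diamond over an implication whose consequent is always true. In fact the argument appeals only to the $\Sfive$-validity of $\psi$ --- neither the $\Sfive$-validity of $\phi$ nor the assumption that $\equivalence{\phi}{\psi}$ is a tautology is invoked. I would therefore expect the conclusion to survive under the weaker hypothesis that $\psi$ alone is $\Sfive$-valid, and I take the extra assumptions to be present only because they are the conditions one naturally checks when applying the proposition to the translations of the $\D$-axioms.
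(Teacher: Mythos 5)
Your argument is correct for the proposition exactly as stated, but it is a genuinely different proof from the paper's, and the difference matters in context. The paper's proof never invokes the $\Sfive$-validity of $\phi$ or $\psi$ at all: it splits into cases according to whether $\psi$ holds at some world accessible from $w$. In the case where $\psi$ fails at every accessible world, the tautology transfers this failure to $\phi$, so $\falsifies{w}{\possibly{\phi}}$ and $w$ itself (reflexivity) witnesses the outer diamond vacuously; in the case where $\truthifies{u}{\psi}$ for some accessible $u$, the tautology gives $\truthifies{u}{\phi}$, reflexivity gives $\truthifies{u}{\possibly{\phi}}$, hence $\truthifies{u}{\implication{\possibly{\phi}}{\psi}}$, and $u$ witnesses the diamond. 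So what the paper actually establishes is the variant in which \emph{both} validity hypotheses are dropped and only the hypothesis that $\equivalence{\phi}{\psi}$ is a tautology is retained. Your proof makes exactly the opposite trade: it keeps only the validity of $\psi$ and discards the tautology. Both variants imply the proposition as literally stated, but they are not interchangeable: the ensuing Corollary applies the proposition to pairs such as $\phi = \alpha$ and $\psi = \negation{\negation{\alpha}}$ (to conclude that $\ddk{10}$ is $\dtwo$-valid), and these formulas are tautologically equivalent but certainly not $\Sfive$-valid. Your version of the result therefore cannot support that application, whereas the paper's can; the stated hypothesis that $\phi$ and $\psi$ be $\Sfive$-valid is evidently a slip in the statement, and the real content --- the part your proof discards --- is that tautological equivalence alone suffices. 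In short, your proof is sound and your observation that the stated hypotheses make the result nearly trivial is correct, but the weakening you propose points in the wrong direction for the use the paper makes of this proposition.
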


\begin{proof}
Consider an $\Sfive$-model $\mathcal{M} = (W,R,V)$ (a triple of a non-empty set of worlds, an equivalence relation, and a function from atoms to sets of worlds) and a world $w$ in $W$.  If $\falsifies{u}{\psi}$ for every world $u$ accessible from $w$, then (since $R$ is reflexive) we have
\[
\truthifies{w}{\possibly{(\implication{\possibly{\phi}}{\psi})}}
\]
trivially.  Suppose, then, that $u$ is a world accessible from $w$ for which $\truthifies{u}{\psi}$.  Then, since $\equivalence{\phi}{\psi}$ is a tautology, $\truthifies{u}{\phi}$ as well.  Since $R$ is reflexive, we have that $\truthifies{u}{\possibly{\phi}}$ as well as $\truthifies{u}{\implication{\possibly{\phi}}{\psi}}$.  Thus
\[
\truthifies{w}{\possibly{(\implication{\possibly{\phi}}{\psi})}}
\]
as desired.
\end{proof}

The standard translation $\tau$ from $\dtwo$-formulas (the connectives are $\negationsymbol$, $\disjunctionsymbol$ as well as $\discussiveimplicationsymbol$ and $\discussiveconjunctionsymbol$), to standard (non-discussive) modal formulas (built from atoms and the usual battery $\negationsymbol$, $\implicationsymbol$, $\disjunctionsymbol$, $\conjunctionsymbol$) is standardly defined as not changing $\disjunctionsymbol$ and $\negationsymbol$.  Proposition~\ref{prop:s5} thus yields:

\begin{coro}If $\dtwo$-formulas $\phi$ and $\psi$ contain neither $\discussiveimplicationsymbol$ nor $\discussiveconjunctionsymbol$ (that is, both are built from atoms and $\disjunctionsymbol$ and $\negationsymbol$ alone), and if $\equivalence{\phi}{\psi}$ is a tautology, then $\di{\phi}{\psi}$ is $\dtwo$-valid.
\end{coro}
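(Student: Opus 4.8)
The plan is to push the claim through the standard translation $\tau$ and then invoke Proposition~\ref{prop:s5}. First I would unfold $\tau$ on the formula $\di{\phi}{\psi}$. Since $\phi$ and $\psi$ are assumed to be built from atoms, $\disjunctionsymbol$, and $\negationsymbol$ alone, and $\tau$ leaves atoms, $\disjunctionsymbol$, and $\negationsymbol$ unchanged, we have $\tau(\phi) = \phi$ and $\tau(\psi) = \psi$. The only discussive connective in $\di{\phi}{\psi}$ is the outermost $\discussiveimplicationsymbol$, which $\tau$ sends to $\implication{\possibly{(\cdot)}}{(\cdot)}$, so $\tau(\di{\phi}{\psi}) = \implication{\possibly{\phi}}{\psi}$. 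By the definition of $\dtwo$-validity in terms of the $\Sfive$ semantics — a $\dtwo$-formula $\chi$ is $\dtwo$-valid exactly when $\possibly{\tau(\chi)}$ is $\Sfive$-valid — the goal that $\di{\phi}{\psi}$ be $\dtwo$-valid reduces to the $\Sfive$-validity of $\possibly{(\implication{\possibly{\phi}}{\psi})}$.

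This last formula is precisely the conclusion of Proposition~\ref{prop:s5}, so it remains to check its hypotheses. The hypothesis that $\equivalence{\phi}{\psi}$ is a tautology is given. The main obstacle is the other hypothesis of Proposition~\ref{prop:s5} as stated, namely that $\phi$ and $\psi$ be $\Sfive$-valid: the Corollary grants no such assumption (for instance one may take $\phi = p$ and $\psi = \negation{\negation{p}}$, where $\equivalence{\phi}{\psi}$ is a tautology yet neither formula is $\Sfive$-valid). A verbatim citation of Proposition~\ref{prop:s5} therefore does not suffice, and this gap is the one delicate point in the argument.

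The resolution I would use is to observe that the proof of Proposition~\ref{prop:s5} never actually appeals to the $\Sfive$-validity of $\phi$ or $\psi$. The only point at which the assumptions enter is the inference ``since $\equivalence{\phi}{\psi}$ is a tautology, $\truthifies{u}{\phi}$'', drawn from $\truthifies{u}{\psi}$ using only the direction $\implication{\psi}{\phi}$ of the tautological equivalence; and this holds at every world $u$ of every $\Sfive$-model irrespective of whether $\phi$ and $\psi$ are valid. Consequently the argument establishing Proposition~\ref{prop:s5} goes through unchanged under the weaker hypotheses available here, yielding the $\Sfive$-validity of $\possibly{(\implication{\possibly{\phi}}{\psi})}$ and hence the $\dtwo$-validity of $\di{\phi}{\psi}$, as desired.
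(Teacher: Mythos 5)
Your proposal is correct, and its skeleton is the same as the paper's own proof: unfold $\tau$ (which leaves $\phi$ and $\psi$ untouched, since they contain no discussive connectives), reduce the $\dtwo$-validity of $\di{\phi}{\psi}$ to the $\Sfive$-validity of $\possibly{(\implication{\possibly{\phi}}{\psi})}$, and appeal to Proposition~\ref{prop:s5}. Where you go beyond the paper is in your second and third paragraphs: the paper's proof ends with the bare sentence ``This follows from Proposition~\ref{prop:s5}'' and never confronts the fact that the proposition, as stated, requires $\phi$ and $\psi$ to be $\Sfive$-valid --- a hypothesis the Corollary does not grant, and one that fails in the very applications the paper makes of it (for $\ddk{10}$, neither $\alpha$ nor $\negation{\negation{\alpha}}$ is valid, which is exactly your example). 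So the mismatch you flag is a genuine defect in the paper's own argument, and your repair --- observing that the validity hypothesis is idle, so that the proof really establishes the stronger statement assuming only the tautological equivalence --- is the right way to close it.

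One caveat on the repair: it is not quite accurate that the proof of Proposition~\ref{prop:s5} goes through \emph{unchanged}, with the hypotheses entering only at the inference from $\truthifies{u}{\psi}$ to $\truthifies{u}{\phi}$. The proof's first case (``if $\falsifies{u}{\psi}$ for every $u$ accessible from $w$, then $\truthifies{w}{\possibly{(\implication{\possibly{\phi}}{\psi})}}$ trivially'') is harmless under the stated hypotheses only because it is vacuous there: a valid $\psi$ cannot fail at the accessible worlds. Once validity is dropped, that case can genuinely occur, and its conclusion is simply false without some link between $\phi$ and $\psi$ (take $\phi = p$, $\psi = q$, with $q$ false everywhere and $p$ true somewhere). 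The case must instead be argued using the other direction of the equivalence: if $\psi$ fails at every world accessible from $w$, then by the $\implication{\phi}{\psi}$ direction of the tautology, contraposed, so does $\phi$; hence $\falsifies{w}{\possibly{\phi}}$, so $\truthifies{w}{\implication{\possibly{\phi}}{\psi}}$ vacuously, and by reflexivity $\truthifies{w}{\possibly{(\implication{\possibly{\phi}}{\psi})}}$. With that step supplied --- the paper's ``trivially'' hides exactly this --- your argument is complete.
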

\begin{proof}
$\di{\phi}{\psi}$ is $\dtwo$-valid just in case $\tau{(\di{\phi}{\psi})}$ is $\Sfive$-valid.  Since the discussive connectives $\discussiveconjunctionsymbol$ and $\discussiveimplicationsymbol$ appear neither in $\phi$ nor in $\psi$, we are to prove that
\[
\possibly{(\implication{\possibly{\phi}}{\psi})}
\]
is $\Sfive$-valid.  This follows from Proposition~\ref{prop:s5}.
\end{proof}

The corollary allows us to quickly identify $\ddk{10}$, $\ddk{15}$, $\ddk{17}$, and $\ddk{20}$ as $\dtwo$-valid, since these formulas are discussive implications whose antecedents and consequents are do not involve the $\discussiveimplicationsymbol$ and $\discussiveconjunctionsymbol$ connectives.  For example, $\ddk{10}$ is dispatched because $\alpha$ and $\negation{\negation{\alpha}}$ are tautologically equivalent.  $\ddk{18}$  and $\ddk{21}$ are not covered by the corollary, since their antecedents and consequents involve discussive connectives.

\subsection{Theorems of $\C$ unprovable in $\D$}\label{sec:unprovable-in-d}

(The following section is a digression from the main line of investigation but is included for the sake of completeness.)

Section~\ref{sec:unprovable-in-c} lists many axioms of $\D$ that are unprovable in $\D$.  Ciuciura singled out $\ddk{19}$ and $\ddk{22}$ as invalid in the intended semantics for $\dtwo$, and thus ought to be missing from an axiomatization.  One might suspect that the way out is to restrict $\D$: throw out the axioms that are not valid in the intended semantics, but keep the rest (possibly reformulated in a more perspicuous way).  On this line of reasoning, one expects to find that every axiom of $\C$ is provable in $\D$ (though one hopes that the axioms $\ddk{19}$ and $\ddk{22}$ singled out by Ciuciura are avoided).  But this is not so: one finds that Ciuciura's axiomatization $\C$ is not simply a restriction of $\D$.  It turns out that (at least) one axiom of $\C$ is unprovable in $\D$.

\begin{prop}\label{prop:ciuciura-13-unprovable}$\Dunprovable{\ciuciura{13}}$\end{prop}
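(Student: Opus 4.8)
The plan is to reuse the finite-matrix (many-valued semantics) method that drives all of Propositions~\ref{prop:ddk-10-unprovable}--\ref{prop:ddk-22-unprovable}, only now I must separate $\ciuciura{13}$ from $\D$ rather than an axiom of $\D$ from $\C$. Concretely, I would look for a finite algebra $\mathfrak{M}$ with a carrier of a few truth values, together with operations interpreting $\negationsymbol$, $\disjunctionsymbol$, $\discussiveconjunctionsymbol$, $\discussiveimplicationsymbol$ and a distinguished set $D$ of designated values, such that (i) every instance of $\ddk{1}$ through $\ddk{22}$ always evaluates into $D$, (ii) modus ponens for $\discussiveimplicationsymbol$ preserves membership in $D$ (if $a \in D$ and the value of $\di{a}{b}$ lies in $D$, then $b \in D$), and yet (iii) some assignment of values to $\alpha, \beta, \gamma$ drives $\ciuciura{13}$ out of $D$. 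Once such an $\mathfrak{M}$ is exhibited, a trivial induction on the length of derivations shows that every $\D$-theorem is designated, so the witnessed undesignated value of $\ciuciura{13}$ yields $\Dunprovable{\ciuciura{13}}$.

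The first concrete step is to fix the size of the carrier and hunt for the operation tables. Because $\ciuciura{13}$ is a discussive implication whose antecedent is the negation of a four-fold discussive conjunction and whose consequent is itself a discussive implication built from disjunctions of $\alpha$, $\beta$, $\gamma$, a two-valued (classical) matrix will not do; as in the earlier propositions, I expect a three- or four-element matrix to suffice, found by automated search over candidate tables (the same tool indicated in the abstract). The search space is constrained sharply by condition (i): the $22$ $\D$-axioms are simultaneously quite restrictive, so I would generate the tables, filter by the requirement that modus ponens be sound and that all $22$ schemes be validated on every assignment, and then test the survivors against $\ciuciura{13}$.

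The bookkeeping step is verification. Having a candidate $\mathfrak{M}$ and designated set $D$, I would (mechanically) check each of $\ddk{1}$--$\ddk{22}$ by exhausting all assignments of carrier values to the schematic letters occurring in that axiom, confirming the value always lands in $D$; check that the modus ponens table never maps a designated premise pair to an undesignated conclusion; and finally compute the value of $\ciuciura{13}$ under the specific assignment to $\alpha, \beta, \gamma$ supplied by the search, confirming it is undesignated. Displaying the tables (the negation column together with the binary table for $\disjunctionsymbol$, $\discussiveconjunctionsymbol$, $\discussiveimplicationsymbol$) and naming the designated set, in the format of Tables~\ref{tab:ddk-10-unprovable}--\ref{tab:ddk-22-unprovable}, then completes the argument.

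The hard part is step one: locating a single matrix that validates all of $\D$ while refuting $\ciuciura{13}$. Validating every one of the $22$ $\D$-axioms at once is a much stiffer constraint than validating the $15$ axioms of $\C$ (the task in the preceding section), and $\ciuciura{13}$ carries three distinct variables and deep nesting, so a separating matrix---if a small one exists---is not obvious by hand. I would therefore lean on the automated matrix search, and the only genuinely delicate point is to be sure the matrix found really validates all $22$ schemes (not merely a sample of instances) and that modus ponens is designation-preserving; the remaining computation of the value of $\ciuciura{13}$ is routine.
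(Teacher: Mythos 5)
Your strategy is exactly the paper's: exhibit a finite logical matrix (operation tables plus a designated set $D$) under which every instance of $\ddk{1}$--$\ddk{22}$ is designated, modus ponens preserves designation, and some instance of $\ciuciura{13}$ is undesignated; then a trivial induction on derivations gives $\Dunprovable{\ciuciura{13}}$. Your framing of conditions (i)--(iii), and your guess that a three- or four-element carrier should suffice, also match the paper: its proof uses a four-element matrix (Table~\ref{tab:c-13-unprovable}) with negation given by $1\mapsto 2$, $2\mapsto 1$, $3\mapsto 4$, $4\mapsto 3$, designated set $\{2,3,4\}$, and explicitly tabulated operations for $\disjunctionsymbol$, $\discussiveconjunctionsymbol$, and $\discussiveimplicationsymbol$.

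The genuine gap is that you never produce the witness. For a proposition of this kind the separating matrix \emph{is} the proof; everything else (the induction on derivation length, the exhaustive verification of the axiom schemes, the designation-preservation of modus ponens) is routine once the tables are on the page. You candidly flag this yourself---``the hard part is step one''---but flagging it does not discharge it: as written, your argument establishes only the conditional claim that \emph{if} such a matrix exists, then $\Dunprovable{\ciuciura{13}}$, and the existence of a small matrix simultaneously validating all twenty-two $\D$-schemes while refuting $\ciuciura{13}$ is precisely the point in question. Deferring it to an unspecified automated search leaves the proposition unproved; to close the gap you would need to display concrete tables and the refuting assignment, in the way the paper's Table~\ref{tab:c-13-unprovable} does.
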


\begin{proof}
Consider the following matrices (Table~\ref{tab:c-13-unprovable}):
\begin{table}[h]
\centering
\begin{tabular}{c||c}
Value & Value of $\negationsymbol$\\
\hline
1 & 2\\
2 & 1\\
3 & 4\\
4 & 3\\
\end{tabular}
\begin{tabular}{c|c||c|c|c|c|}
Value & Value & $\disjunctionsymbol$ & $\discussiveconjunctionsymbol$ & $\discussiveimplicationsymbol$\\
\hline
1 & 1 & 1 & 1 & 2\\
1 & 2 & 2 & 1 & 2\\
1 & 3 & 3 & 1 & 2\\
1 & 4 & 4 & 1 & 2\\
2 & 1 & 2 & 1 & 1\\
2 & 2 & 2 & 2 & 2\\
2 & 3 & 2 & 3 & 3\\
2 & 4 & 2 & 4 & 4\\
3 & 1 & 3 & 1 & 1\\
3 & 2 & 2 & 2 & 2\\
3 & 3 & 3 & 3 & 3\\
3 & 4 & 2 & 4 & 4\\
4 & 1 & 4 & 1 & 1\\
4 & 2 & 2 & 2 & 2\\
4 & 3 & 2 & 3 & 3\\
4 & 4 & 4 & 4 & 4\\
\end{tabular}
\caption{\label{tab:c-13-unprovable}Matrices validating $\D$ in which $\ciuciura{13}$ is invalid}
\end{table}
We then declare that any formula having the values $2$, $3$, or $4$ according to these tables is provable; formulas having value $1$ declared to be unprovable.
\end{proof}

\section{Conclusion and further problems}\label{sec:prospects}

In light of the results of Section~\ref{sec:non-equivalence}, it would appear that the Ciuciura's completeness theorem (Theorem 4.3 of~\cite{ciuciura2008frontiers}) is mistaken.  We thus seem to lack a complete axiomatization of $\dtwo$.  More accurately, neither $\C$ nor $\D$ is a complete axiomatization of $\dtwo$.  Other axiomatizations, such Vasyukov's~\cite{vasyukov2001new}, deserves to be compared to $\C$ and $\D$.  It is correct to point out that some of the non-theorems of $\D$ are indeed not valid in the intended semantics, so they ought not to be added to $\C$.  However, some $\D$-theorems that are unprovable in $\C$, such as $\ddk{10}$ and $\ddk{11}$, are indeed valid in the intended semantics for $\dtwo$.  It would thus appear that the challenge of axiomatizing $\dtwo$ remains open.  The most problematic formulas are those which~(i)~are valid in the intended semantics for~$\dtwo$ but which~(ii)~are not theorems of $\C$.  These are collected in Table~\ref{tab:valid-or-invalid}.  Does one obtain an axiomatization of $\dtwo$ by imply adding these to $\C$?

\paragraph{Acknowledgments.}
The author was supported by FWF grant P25417-G15 (LOGFRADIG).

\bibliographystyle{sl}
\bibliography{jaskowski}


\AuthorAdressEmail{Jesse Alama}{Theory and Logic Group\\
Technical University of Vienna\\
Vienna, Austria}{alama@logic.at}

\end{document}